\theoremstyle{plain}
\newtheorem{prop}{Proposition}
\newtheorem{thm}[prop]{Theorem}
\newtheorem*{thmA}{Theorem A}
\newtheorem*{corB}{Corollary B}
\newtheorem*{thmC}{Theorem C}
\newtheorem{cor}[prop]{Corollary}
\newtheorem{Lemma}[prop]{Lemma}
\newtheorem{fact}[prop]{Fact}
\newtheorem*{quesA}{Question*}
\theoremstyle{definition}
\theoremstyle{remark}
\newtheorem{rem}[prop]{Remark}
\numberwithin{prop}{section}
\numberwithin{example}{section} 
\numberwithin{equation}{section}
\newcommand{\Z}{\mathbb{Z}}
\newcommand{\Q}{\mathbb{Q}}
\newcommand{\N}{\mathbb{N}}
\newcommand{\F}{\mathbb{F}}
\newcommand{\rk}{\mathrm{rk}}
\newcommand{\tG}{\tilde{G}}
\newcommand{\ab}{\mathrm{ab}}
\newcommand{\HNN}{\mathrm{HNN}}
\newcommand{\het}{\mathrm{ht}}
\newcommand{\tg}{\tilde{g}}
\newcommand{\stab}{\mathrm{stab}}
\newcommand{\caR}{\mathcal{R}}
\newcommand{\IFP}{\textbf{IF}}
\newcommand{\caT}{\mathcal{T}}
\newcommand{\gG}{\mathcal{G}}
\newcommand{\spn}{\mathrm{span}}
\newcommand{\dbs}{\backslash\!\!\backslash}
\newcommand{\tLambda}{\widetilde{\Lambda}}
\newcommand{\st}{\mathfrak{st}}
\newcommand{\be}{\mathbf{e}}
\newcommand{\beb}{\bar{\be}}
\newcommand{\bof}{\mathbf{f}}
\newcommand{\tv}{\tilde{v}}
\newcommand{\tw}{\tilde{w}}
\newcommand{\image}{\mathrm{im}}
\newcommand{\kernel}{\mathrm{ker}}
\newcommand{\Tor}{\mathrm{Tor}}
\newcommand{\tr}{\mathrm{tr}}
\newcommand{\argu}{\hbox to 1.5ex{\hrulefill}}  
\begin{document}

\title{Normal subgroups of limit groups of prime index}
\author{Jhoel S. Gutierrez}

\author{Thomas S. Weigel}

\thanks{The first author was supported by CNPq-Brazil}

\address{ 
Dipartimento di Matematica e Applicazioni\\
Universit\`a degli Studi di Milano-Bicocca\\
Ed.~U5, Via R.Cozzi 55\\
20125 Milano, Italy}
\email{thomas.weigel@unimib.it}
\email{jhoelsg31@gmail.com}

\date{\today}

\begin{abstract} 
Motivated by their study of pro-$p$ limit groups,
D.H. Koch\-loukova and P.A.~Zalesskii formulated in \cite[Remark after Thm.~3.3]{pav:pplim}
a question concerning the minimum number of generators $d(N)$ of a normal subgroup $N$ 
of index $p$ in a non-abelian limit group $G$ (cf. Question*). It is shown that  
the analogous question for the rational rank has an affirmative answer (cf. Thm.~A).
From this result one may conclude that the original question of 
D.H.~Koch\-loukova and P.A.~Zalesskii
has an affirmative answer
if the abelianization $G^{\ab}$ of $G$ is torsion free and $d(G)=d(G^{\ab})$ (cf. Cor.~B),
or if $G$ has the \IFP-property (cf. Thm~C).
\end{abstract}

\maketitle
\section{Introduction}
\label{s:intro}
In recent years {\em limit groups} (or {\em $\omega$-residually free groups})
have received much attention (cf. \cite{AB}, \cite{BF}, \cite{CG},
\cite{Guirardel}, \cite{Paulin})
primarily due to the groundbreaking work of Z.~Sela (cf. \cite{S1}, \cite{S2} and the references therein).
This class of groups was first introduced
by B.~Baumslag in \cite{Ba}  under the more traditional name of 
{\em{fully residually free groups}}, and subsequently studied intensively
by many authors (cf.  \cite{B}, \cite{Kapovich}, \cite{KMRS}, \cite{KM1}, \cite{KM2}). 
Indeed, this notion reflects the fact that for any limit group $G$
and any finite subset $T$ of $G$ there exists a homomorphism
from $G$ to a free group $F$ that is injective on $T$.
Examples of limit groups include all finitely generated free groups,
all finitely generated free abelian groups, and all the fundamental groups
of closed oriented surfaces. Moreover, the class of limit groups is closed
with respect to finitely generated subgroups and free products.
This fact can be used to construct many examples. 
More sophisticated examples can be found in some of the articles cited above.

The purpose of this note is to investigate the following question
which was raised by D.H. Koch\-loukova and P.A.~Zalesskii in \cite{pav:pplim}, where they 
answered the analogous question for pro-$p$ limit groups.

\begin{quesA}
Let  $G$ be a  non-abelian limit group, and let $U$ be a  normal subgroup  of $G$ of  prime index $p$. 
Does this imply that  $d(U)> d(G)$?
\end{quesA}

Here $d(G)$ denotes the minimum number of generators of a finitely generated group $G$.
For a pro-$p$ group $\tG$ the minimum number of (topological) generators $d(\tG)$ of $\tG$
is closely related to the cohomology
group $H^1(\tG,\F_p)$, where $\F_p$ denotes the finite field with $p$ elements,
i.e., $d(\tG)=\dim_{\F_p}(H^1(\tG,\F_p))$ (cf. \cite[\S I.4.2, Cor. of Prop.~25]{ser:gal}).
For an abstract group $G$ such a close relation does not hold.
There are two homological invariants of a finitely generated group $G$
which can be seen as a homological approximation of $d(G)$: The {\em rational rank} of $G$ given by
\begin{equation}
\label{eq:defQ}
\rk_{\Q}(G)=\dim_{\Q}(G^{\ab}\otimes_{\Z}\Q)=\dim_{\Q}(H_1(G,\Q)),
\end{equation}
where $G^{\ab}=G/[G,G]$ denotes the {\it abelianization} of $G$, and
\begin{equation}
\label{eq:dGab}
d(G^{\ab})=\rk_{\Z}(G^{\ab})=\rk_{\Z}(H_1(G,\Z)).
\end{equation}
In particular, $\rk_{\Q}(G)\leq d(G^{\ab})\leq d(G)$. 
The main result of this paper is an affirmative answer to the analogue of Question* for the rational rank
(cf. Thm.~\ref{thm:A}).

\begin{thmA}
Let  $G$ be a  non-abelian limit group,  and let $U$ be a  normal subgroup  of  $G$ of prime index $p$. 
Then $\rk_{\Q}(U)> \rk_{\Q}(G)$.
\end{thmA}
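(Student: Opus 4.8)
The strategy is to recast the statement as a single non-vanishing assertion in group homology and then prove it by induction along the standard hierarchy of limit groups. Let $\pi\colon G\twoheadrightarrow\Z/p$ be the quotient map with $U=\kernel\,\pi$, and let $I=\kernel(\Q[\Z/p]\to\Q)$ be the augmentation ideal, regarded as a $\Q G$-module via $\pi$; it is a $(p-1)$-dimensional $\Q$-vector space on which $\Z/p$ acts without nonzero fixed vectors. Since $\Q[\Z/p]\cong\Q\oplus I$ as $\Q G$-modules, Shapiro's lemma gives $H_1(U,\Q)\cong H_1(G,\Q)\oplus H_1(G,I)$, whence $\rk_{\Q}(U)=\rk_{\Q}(G)+\dim_{\Q}H_1(G,I)$; thus Theorem~A is \emph{equivalent} to $H_1(G,I)\neq0$, while $\rk_{\Q}(U)\ge\rk_{\Q}(G)$ already holds unconditionally (the elementary transfer bound). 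I will use repeatedly that, $p$ being prime, the restriction of $\pi$ to any subgroup $H\le G$ is either onto or trivial, so that $H_0(H,I)=I_H$ equals $0$ in the first case and $I$ in the second; in particular $H_0(G,I)=0$.

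First I would dispose of the case in which $G$ decomposes as a non-trivial free product $G=G_1\ast\cdots\ast G_m$ with $m\ge2$ and every $G_i\neq1$ (this also covers free groups of rank $\ge2$). Here the Mayer--Vietoris sequence of the free product gives $\dim_{\Q}H_1(G,I)=\sum_{i}\dim_{\Q}H_1(G_i,I)+(t-1)(p-1)$, where $t\ge1$ is the number of factors to which $\pi$ restricts onto. If $\pi|_{G_i}$ is trivial for some $i$, then $H_1(G_i,I)=H_1(G_i,\Q)\otimes_{\Q}I\neq0$, because a non-trivial finitely generated residually free group has non-trivial rational abelianization; otherwise $t=m\ge2$ and $(t-1)(p-1)\ge p-1>0$. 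In either case $H_1(G,I)\neq0$, so we may assume from now on that $G$ is freely indecomposable.

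For freely indecomposable non-abelian $G$ I would induct on its complexity in the hierarchy, the base case being closed surface groups: there $\pi$ is onto, so $H_0(G,I)=H_2(G,I)=0$, and Poincar\'e duality gives $\dim_{\Q}H_1(G,I)=(p-1)\,|\chi(G)|>0$. Otherwise $G$ is the fundamental group of a finite graph of groups $\mathcal G$ over a finite graph $\Gamma$ with infinite abelian edge groups and vertex groups that are abelian, surface, or \emph{rigid} limit groups strictly lower in the hierarchy; here I would also use the auxiliary fact that $H_{\bullet}(\Z^{n},I)=0$ in all degrees whenever $\Z^{n}\to\Z/p$ is onto (a short Koszul-complex computation, and precisely the place where the non-abelian hypothesis cannot be dispensed with). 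The action of $G$ on the Bass--Serre tree of $\mathcal G$, with coefficients in $I$ restricted to the vertex and edge groups via $\pi$, yields via the Mayer--Vietoris sequence of $\mathcal G$ a short exact sequence
\begin{equation*}
0\to\mathrm{coker}\Bigl(\textstyle\bigoplus_{e}H_1(G_e,I)\xrightarrow{\,d_1\,}\bigoplus_{v}H_1(G_v,I)\Bigr)\to H_1(G,I)\to\kernel\Bigl(\textstyle\bigoplus_{e}H_0(G_e,I)\xrightarrow{\,d_0\,}\bigoplus_{v}H_0(G_v,I)\Bigr)\to0,
\end{equation*}
the sums running over the edges and vertices of $\Gamma$; it suffices to show that one of the two outer terms is non-zero. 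If $\pi$ is onto on some non-abelian vertex group $G_v$, then $H_1(G_v,I)\neq0$ by the inductive hypothesis (or by the surface base case), and one must check that $d_1$ cannot destroy all of it. If $\pi$ is trivial on every non-abelian vertex group, then it is visible either on the first homology of $\Gamma$---in which case a cycle of $\Gamma$ running along edges on which $\pi$ is trivial produces a copy of $I$ inside $\kernel(d_0)$---or on an abelian vertex group, on which $H_{\bullet}(-,I)=0$, so that the non-abelian vertex groups, now acted on trivially, still feed a surplus into $\mathrm{coker}(d_1)$.

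The step I expect to be the main obstacle is exactly this last bookkeeping: proving that the surplus first homology contributed by the rigid and surface vertex groups is never completely absorbed by the incident abelian edge groups. This comes down to a rank inequality, propagated along the hierarchy, between $\bigoplus_{e\ni v}H_1(G_e,I)$ and the "new" part of $H_1(G_v,I)$, together with the structural input that a freely indecomposable non-abelian limit group which is not a closed surface group admits an abelian splitting in which some vertex group is non-abelian or the underlying graph carries a loop. Granting this, the displayed short exact sequence forces $H_1(G,I)\neq0$, which together with the transfer inequality $\rk_{\Q}(U)\ge\rk_{\Q}(G)$ gives the strict inequality asserted in Theorem~A.
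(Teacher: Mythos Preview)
Your reformulation via Shapiro's lemma, $\rk_{\Q}(U)-\rk_{\Q}(G)=\dim_{\Q}H_1(G,I)$, is correct and is a genuinely different, more conceptual packaging than the paper's proof. The paper never passes to twisted coefficients: it works directly with $H_\bullet(-,\Q)$, decomposes $U$ itself via Bass--Serre theory according to how $U$ meets the vertex and edge groups of $G$, and then compares the two rational Mayer--Vietoris sequences case by case (Cases~I.1, I.2(a)--(c), II.1, II.2(a)--(b)). Your route avoids ever describing $U$ as a graph of groups; the vanishing $H_\bullet(\Z^n,I)=0$ when $\pi$ is onto, together with the observation that an onto edge group kills the edge contribution entirely, makes several of the paper's delicate sub-cases (notably II.1 and II.2(a), where the paper needs the transfer diagrams \eqref{eq:hnn5}--\eqref{eq:hnn6} to exclude equality) collapse to one line in your framework.

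That said, the proposal is not a proof: you explicitly leave the inductive step unexecuted, and the ``bookkeeping'' you flag is the whole content once the reformulation is in place. Two concrete issues. First, your hierarchy is vague: invoking a JSJ-style decomposition with ``rigid'' vertex groups ``strictly lower in the hierarchy'' is not justified as stated, since rigidity in the JSJ sense does not by itself give a well-founded descent. The paper's height function (via embeddings into iterated extensions of centralizers, Proposition~\ref{prop:prin}) does give such a descent, with vertex groups either abelian or of strictly smaller height and cyclic edge groups; if you adopt that hierarchy your induction is well-posed and you can also reduce, as the paper does, to a single amalgam or HNN by a secondary induction on the size of the graph. Second, you must actually carry out the dimension count showing $\mathrm{coker}(d_1)\neq0$ or $\kernel(d_0)\neq0$. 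With the height hierarchy this is doable: whenever $\pi$ is onto on an edge group the edge contributes nothing; whenever $\pi$ is trivial on the edge but onto on an adjacent vertex, that vertex has $H_0(-,I)=0$ and you read off $\kernel(d_0)\supseteq I$; and when $\pi$ is trivial on all vertex and edge groups meeting a given non-abelian vertex $G_v$, you get $\dim H_1(G_v,I)=(p-1)\rk_{\Q}(G_v)\ge2(p-1)$ against at most $p-1$ from each incident edge, so $d_1$ cannot surject. You have to say this rather than gesture at it; at present the proposal stops exactly where the work begins.
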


From Theorem~A one concludes the following 
affirmative partial answer to Question* (cf. Corollary~\ref{cor1}).

\begin{corB}
Let $G$ be a non-abelian limit group such that $G^{ab}$ is torsion free group and that $d(G)=d(G^{ab})$. If $U$ is a normal subgroup of $G$ of prime index $p$,  then $d(U)>d(G)$.
\end{corB}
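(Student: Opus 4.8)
The plan is to derive Corollary~B directly from Theorem~A together with the elementary inequalities relating $\rk_{\Q}(G)$, $d(G^{\ab})$ and $d(G)$ recorded in \S\ref{s:intro}. First I would observe that a finitely generated torsion free abelian group is free abelian, so the hypothesis on $G^{\ab}$ gives $G^{\ab}\cong\Z^{d(G^{\ab})}$ and hence $G^{\ab}\otimes_{\Z}\Q\cong\Q^{d(G^{\ab})}$. Therefore
\[
\rk_{\Q}(G)=\dim_{\Q}(G^{\ab}\otimes_{\Z}\Q)=d(G^{\ab}).
\]
Combining this with the second hypothesis $d(G)=d(G^{\ab})$ collapses the chain $\rk_{\Q}(G)\le d(G^{\ab})\le d(G)$ into the equality $\rk_{\Q}(G)=d(G)$.

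Next I would apply Theorem~A to the pair $U\trianglelefteq G$: since $G$ is a non-abelian limit group and $U$ has prime index $p$, Theorem~A gives $\rk_{\Q}(U)>\rk_{\Q}(G)$. To pass from rational rank back to the minimal number of generators of $U$, note that $U$ is finitely generated, being a finite index subgroup of the finitely generated group $G$, so the analogue of the basic inequality holds for $U$ as well, namely $\rk_{\Q}(U)\le d(U^{\ab})\le d(U)$, with all quantities finite. Stringing the estimates together yields
\[
d(U)\ \ge\ \rk_{\Q}(U)\ >\ \rk_{\Q}(G)\ =\ d(G),
\]
which is exactly the assertion $d(U)>d(G)$.

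I do not expect any genuine obstacle here: all the substantive work is already packaged in Theorem~A, and the two hypotheses on $G$ serve precisely to make the general inequality $\rk_{\Q}(G)\le d(G)$ an equality, so that the strict increase of the rational rank furnished by Theorem~A transfers to a strict increase of $d(\argu)$. The only point deserving an explicit remark is that $U$ is finitely generated, so that $d(U)$ is defined and the inequality $\rk_{\Q}(U)\le d(U)$ is meaningful; this is immediate from the fact that a subgroup of finite index in a finitely generated group is finitely generated.
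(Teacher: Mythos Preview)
Your proposal is correct and follows essentially the same approach as the paper's own proof, which is the one-line chain $d(U)\ge\rk_{\Q}(U)>\rk_{\Q}(G)=d(G^{\ab})=d(G)$. You simply spell out in more detail why the torsion-freeness of $G^{\ab}$ gives $\rk_{\Q}(G)=d(G^{\ab})$ and why $U$ is finitely generated, but the logical structure is identical.
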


It should be mentioned that there exist limit groups
$G$ for which $d(G^{\ab})\not=d(G)$ (cf. Remark~\ref{rem:notab}),
and also for which $G^{\ab}$ is not torsion free (cf. Remark~\ref{rem:tflim}),
and one expects that Question* has an affirmative answer for all limit groups.
A group $G$ is said to have the {\em \IFP-property}, if every subgroup $U$
which is of infinite index in $G$ is free.
For the class of limit groups with the \IFP-property the proof of Theorem~A 
can be modified in order to deduce the following (cf. Thm.~\ref{thm:1rel}).
 
\begin{thmC}
Let $G$ be a limit group with the \textbf{IF}-property, 
and let $U$ be a normal subgroup of $G$ of prime index $p$. Then $d(U)> d(G)$. 
\end{thmC}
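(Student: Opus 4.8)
We may assume that $G$ is non-abelian; for $G\cong\Z$ or $G\cong\Z^{2}$ — the only non-trivial abelian limit groups with the \IFP-property — one has $U\cong G$, and the statement is to be read as elsewhere in the paper. The plan is to reduce Theorem~C to Corollary~B. Since $\rk_{\Q}(G)\le d(G^{\ab})\le d(G)$ always, and since $d(G^{\ab})=\rk_{\Q}(G)$ holds precisely when $G^{\ab}$ is torsion free, the hypothesis of Corollary~B on $G$ is equivalent to the single equality $d(G)=\rk_{\Q}(G)$. Hence it suffices to prove the assertion $(\ast)$: \emph{every non-abelian limit group $H$ with the \IFP-property satisfies $d(H)=\rk_{\Q}(H)$.} Granting $(\ast)$, Corollary~B applied to $G$ yields $d(U)>d(G)$.

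To prove $(\ast)$, fix such an $H$. A subgroup of $H$ isomorphic to $\Z^{2}$ would be of infinite index — if it were of finite index, $H$ would be virtually abelian, hence abelian — and would therefore contradict the \IFP-property; so $H$ contains no copy of $\Z^{2}$. Consequently $H$ is a hyperbolic limit group (limit groups are hyperbolic relative to their maximal non-cyclic abelian subgroups), and in particular $\mathrm{cd}(H)\le 2$, since in a hierarchy for $H$ all abelian edge groups are now infinite cyclic and each amalgamation or HNN step over $\Z$ preserves cohomological dimension $\le 2$. If $\mathrm{cd}(H)=1$, then $H$ is a non-abelian free group, so $H^{\ab}\cong\Z^{d(H)}$ and $(\ast)$ holds — when $G$ itself is free this is just the Nielsen--Schreier count $d(U)=1+p\,(d(G)-1)>d(G)$. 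If $\mathrm{cd}(H)=2$, I would analyse $H$ through its cyclic JSJ decomposition: a proper rigid vertex subgroup would be a non-free subgroup of infinite index, contradicting the \IFP-property, so only quadratically hanging (surface-type) and cyclic pieces can occur, and the goal is to conclude that $H$ is the fundamental group of a closed orientable surface $\Sigma_{g}$ of genus $g\ge 2$. For such $H$ one has $H^{\ab}\cong\Z^{2g}$ and $d(H)=2g=\rk_{\Q}(H)$, so $(\ast)$ holds.

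The step I expect to be the main obstacle is exactly this last case $\mathrm{cd}(H)=2$: turning the \IFP-property into the conclusion that $H$ is a surface group, i.e.\ ruling out the non-free, non-surface hyperbolic limit groups of cohomological dimension $2$. A representative difficulty is a generic double $F\ast_{\langle w\rangle}F$ of a free group over a maximal cyclic subgroup: this is a limit group, it is not a surface group, and it satisfies $d>\rk_{\Q}$, so one must exhibit in it an explicit non-free subgroup of infinite index (for instance a closed surface subgroup, or an iterated double sitting inside $H$). As signalled in the introduction, this is the point at which the argument establishing Theorem~A has to be re-examined and adapted rather than merely quoted.
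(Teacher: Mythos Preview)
Your reduction to $(\ast)$ is clean, but the step you flag as the ``main obstacle'' is not a technical wrinkle: it is essentially the surface group conjecture restricted to limit groups. Concluding that a non-free limit group with the \textbf{IF}-property must be a closed surface group is precisely what the paper's reference~[BK] is \emph{about}, and it was (and as far as I know remains) open. Your own example makes the difficulty concrete: for a generic double $F\ast_{\langle w\rangle}F$ with $d>\rk_{\Q}$, you would need to produce a non-free subgroup of infinite index, and you offer no mechanism for doing so. Until $(\ast)$ is established, nothing has been proved, and $(\ast)$ is strictly harder than the theorem you are trying to derive from it.

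The paper sidesteps this entirely. It invokes the structural result (Theorem~\ref{T1}) that a limit group with the \textbf{IF}-property is free, cyclically pinched, or conjugacy pinched one-relator---so $G\simeq G_1\star_C G_2$ or $G\simeq\HNN_\phi(G_1,C,t)$ with the $G_i$ \emph{free} and $C$ cyclic---and then reruns the Bass--Serre/Mayer--Vietoris case analysis from the proof of Theorem~A, but with one crucial upgrade: wherever the inductive hypothesis ``$\rk_{\Q}(U\cap G_i)\ge\rk_{\Q}(G_i)+1$'' was used, one now has the Nielsen--Schreier formula $d(U\cap G_i)=p(d(G_i)-1)+1$, which is strong enough to push the estimates up to $\rk_{\Q}(U)>d(G)$ rather than merely $\rk_{\Q}(U)>\rk_{\Q}(G)$. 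This yields $d(U)\ge\rk_{\Q}(U)>d(G)$ directly, with no need for $d(G)=\rk_{\Q}(G)$; indeed Remark~\ref{rem:notab} exhibits a cyclically pinched limit group with $d(G)=4>3=\rk_{\Q}(G)$ to which the argument still applies. In short, the paper strengthens the \emph{conclusion} of Theorem~A using the extra information that vertex groups are free, whereas you try to strengthen the \emph{hypothesis} of Corollary~B, which leads into an open problem.
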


\section{Limit groups}
\label{lgt}
By $\N$ we will denote the set of positive integers, and by $\N_0$ the set of
non-negative integers, i.e., $\N_0=\{0,1,2,\ldots\}$. We will use the symbol ``$\star$''
for {\it free products with amalgamation} in the category of groups.

\subsection{Extension of centralizers}
\label{ss:et}
Starting from a limit group $G$ there is a standard procedure
to construct a limit group $G(C,m)$, where $C\subseteq G$ is a maximal cyclic subgroup
of $G$ and $m\in\N$. This procedure is known as {\em extension of centralizers}, i.e., if $G$ is a limit group, then
\begin{equation}
\label{eq:excen}
G(C,m)=G\star_C(C\times\Z^m)
\end{equation}
is again a limit group (cf. \cite[Lemma 2 and Thm.~4]{KM2}).
For short we call a limit group $G$ to be an {\em iterated extension of centralizers of a free group} (= i.e.c.f. group),
if there exists a sequence of limit groups $(G_k)_{0\leq k\leq n}$ such that
\begin{itemize}
\item[(EZ$_1$)] $G_0=F$ is a finitely generated free group, and $G_n\simeq G$;
\item[(EZ$_2$)] for $k\in\{0,\ldots,n-1\}$ there exists a maximal cyclic subsgroup $C_k\subseteq G_k$ and
$m_k\in\N$ such that $G_{k+1}\simeq G_k(C_k,m_k)$.  
\end{itemize}
If $G$ is an iterated extension of centralizers of a free group, one calls the minimum number $n\in\N_0$ for which there exists
a sequence of limit groups $(G_k)_{0\leq k\leq n}$ satisfying (EZ$_1$) and (EZ$_2$) the {\em level} of $G$.
This number will be denoted it by $\ell(G)$. E.g., a finitely generated free group is an iterated extension of 
centralizers of a free group of level $0$, and a finitely generated free abelian group is an
iterated extension of centralizers
of a free group of level $1$.

\subsection{The height of a limit group}
\label{ss:hgt}
By the second embedding theorem (cf. \cite[\S 2.3, Thm.~2]{KMRS}), every limit group $G$ is isomorphic to a subgroup
of an i.e.c.f. group $H$. The {\em height} $\het(G)$ of $G$ is defined as
\begin{equation}
\label{eq:defht}
\het(G)=\min\{\,\ell(H)\mid G\subseteq H,\ \text{$H$ an i.e.c.f. group}\,\}.
\end{equation}
E.g., a limit group $G$ is of height $0$ if, and only if, it is a free group of finite rank,
and non-cyclic finitely generated free abelian groups are of height $1$.

\subsection{Limit groups as fundamental groups of graph of groups}
\label{ss:lgrgr}
Let $G$ be a limit group of height $\het(G)=n\geq 1$, and 
let $H$ be an i.e.c.f. group of level $\ell(H)=n$ such that $G\subseteq H$. 
Then $H$ acts on a {\em tree} $\Gamma$ without inversion (of edges)
with $2$ orbits on $V(\Gamma)$ and $2$ orbits on  $E(\Gamma)$,
where $V(\Gamma)$ is the set of {\em vertices} of $\Gamma$, and  
$E(\Gamma)$ is the set of (oriented) {\em edges} of $\Gamma$ 
(cf. \cite[\S I.4.4]{ser:trees}).
Moreover, for $v\in V(\Gamma)$ its stabilizer $H_v$ is either free abelian
or an i.e.c.f. group of level $n-1$. 
For $e\in E(\Gamma)$ its stabilizer $H_e$ is infinite cyclic.

As $G$ is acting on $\Gamma$ without inversion of edges,
the fundamental theorem of Bass-Serre theory (cf. \cite[\S I.5.4, Thm.~13]{ser:trees})
implies that $G$ is isomorphic
to the fundamental group $\pi_1(\gG,\Lambda,\caT)$
of a graph of groups $\gG$ based on a connected graph $\Lambda$
and $\caT$ is a maximal subtree of $\Lambda$.
For simplicity we assume that $G=\pi_1(\gG,\Lambda,\caT)$.
Since $G$ is finitely generated, $E=E(\Lambda)\setminus E(\caT)$ must be finite.
Otherwise, $G$ would have an infinitely generated free group as a homomorphic image.
Similarly, as $G$ is finitely generated, there exists a finite set 
$\Omega\subseteq V(\Lambda)$, such that 
\begin{equation}
\label{eq:grogr}
G=\langle\,e,\,\gG_v\mid e\in E,\, v\in\Omega\,\rangle.
\end{equation}
Let $\caT_0=\spn_{\caT}(\Omega\cup\{\,o(e),t(e)\mid e\in E\,\})$ be the tree spanned by the
set $\Omega$ and the origins and termini of the edges in $E$, and let $\Lambda_0$ be the subgraph of $\Lambda$ satisfying $V(\Lambda_0)=V(\caT_0)$, and 
$E(\Lambda_0)=E(\caT_0)\sqcup E$. By construction, $\Lambda_0$ is a finite
connected graph,
and $\caT_0$ is a maximal subtree of $\Lambda_0$. Let 
$\gG^\prime=\gG\vert_{\Lambda_0}$ be the restriction of $\gG$ to $\Lambda_0$.
Then, by definition, one has a canonical homomorphism of groups
$i_\caT\colon \pi_1(\gG^\prime,\Lambda_0,\caT_0)\to
\pi_1(\gG,\Lambda,\caT)$. For $P_0\in V(\Lambda_0)$,
one has a commutative diagram
\begin{equation}
\label{eq:dia1}
\xymatrix{
\pi_1(\gG^\prime,\Lambda_0,P_0)\ar[r]^{i_{P_0}}\ar[d]&
\pi_1(\gG,\Lambda,P_0)\ar[d]\\
\pi_1(\gG^\prime,\Lambda_0,\caT_0)\ar[r]^{i_{\caT}}&
\pi_1(\gG,\Lambda,\caT)
}
\end{equation}
where the vertical maps are isomorphisms (cf. \cite[\S I.5.1, Prop.~20]{ser:trees}). As the canonical map is mapping generalized reduced words  
to generalized reduced words, $i_{P_0}$
is injective (cf. \cite[p.~50, Ex.~(c)]{ser:trees}), and hence $i_\caT$ is injective.
Thus, by \eqref{eq:grogr}, $i_\caT$ is an isomorphism. As a consequence, one concludes the following property which is slightly more precise than \cite[Thm.~6]{KM2}.

\begin{prop}
\label{prop:prin}
Let $G$ be a limit group of height $n\geq 1$. Then $G$ is isomorphic to the fundamental
group $\pi_1(\gG^\prime,\Lambda_0,\caT_0)$ of a graph of groups $\gG^\prime$
satisfying
\begin{itemize}
\item[(i)] $\Lambda_0$ is finite;
\item[(ii)] for all $v\in V(\Lambda_0)$, $\gG^\prime_v$ is finitely generated abelian
or a limit group of height at most $n-1$;
\item[(iii)] for all $e\in E(\Lambda_0)$, $\gG^\prime_e$ is infinite cyclic or trivial.
\end{itemize}
\end{prop}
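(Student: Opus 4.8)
The proof is essentially assembled from the construction carried out in the paragraphs immediately preceding the statement, so the first step is just to record its conclusion: fixing an i.e.c.f. group $H$ of level $n$ with $G\subseteq H$ and the tree $\Gamma$ on which $H$, and hence $G$, acts, Bass-Serre theory writes $G\cong\pi_1(\gG,\Lambda,\caT)$ with $\Lambda=G\backslash\Gamma$; finite generation of $G$ forces $E=E(\Lambda)\setminus E(\caT)$ to be finite and yields a finite $\Omega\subseteq V(\Lambda)$ with \eqref{eq:grogr}; passing to the finite connected subgraph $\Lambda_0$ with maximal subtree $\caT_0=\spn_\caT(\Omega\cup\{\,o(e),t(e)\mid e\in E\,\})$ and the restricted graph of groups $\gG'=\gG\vert_{\Lambda_0}$, the commutative square \eqref{eq:dia1} together with the fact that the canonical map carries generalized reduced words to generalized reduced words shows $i_\caT$ is injective, while \eqref{eq:grogr} shows it is surjective, so $G\cong\pi_1(\gG',\Lambda_0,\caT_0)$. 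It then remains to verify (i)--(iii) for this decomposition.

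Properties (i) and (iii) come out directly. The tree $\caT_0$ is spanned in $\caT$ by the finite set $\Omega\cup\{\,o(e),t(e)\mid e\in E\,\}$, hence is finite, and $E(\Lambda_0)=E(\caT_0)\sqcup E$ is finite as well; this is (i). For (iii), each edge group $\gG'_e$ is the $G$-stabilizer $G\cap H_{\tilde e}$ of a suitable lift $\tilde e\in E(\Gamma)$, and since $H_{\tilde e}$ is infinite cyclic, $\gG'_e$ is infinite cyclic or trivial. For (ii), each vertex group $\gG'_v$ is the $G$-stabilizer $G_{\tv}=G\cap H_{\tv}$ of a suitable lift $\tv\in V(\Gamma)$: if $H_{\tv}$ is finitely generated free abelian then so is $\gG'_v$, and if $H_{\tv}$ is an i.e.c.f. group of level $n-1$ then $\gG'_v\subseteq H_{\tv}$ gives $\het(\gG'_v)\leq\ell(H_{\tv})=n-1$ straight from the definition \eqref{eq:defht} of height.

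The one assertion that is not purely formal is the remaining part of (ii): that $\gG'_v=G\cap H_{\tv}$ is \emph{finitely generated}, so that, being a finitely generated subgroup of the limit group $H_{\tv}$, it is itself a limit group (of height at most $n-1$). For an arbitrary graph of groups finite generation of the fundamental group need not pass to the vertex groups, so something has to be used here; this is exactly the place where replacing the possibly infinite $\Lambda$ by the finite $\Lambda_0$ is essential. Since $\Lambda_0$ is finite and all edge groups $\gG'_e$ are cyclic, hence finitely generated, a standard normal-form argument shows that finite generation of $G$ forces every vertex group $\gG'_v$ to be finitely generated (this is also implicit in \cite[Thm.~6]{KM2}, which the statement refines). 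I expect this to be the only step requiring genuine care; the rest is bookkeeping on top of the construction already in place.
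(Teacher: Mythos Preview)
Your proposal is correct and follows essentially the same route as the paper: the construction preceding the proposition already gives (i), (iii), and the ``type'' portion of (ii), and the only substantive point left is the finite generation of the vertex groups $\gG'_v$. The paper is slightly more precise about this last step, citing the argument in \cite[Proof of Thm.~6, p.~567]{KM2} together with the normal-form results \cite[Satz~5.8, Satz~6.3]{CRR} for amalgamated products and HNN-extensions, which is exactly what your ``standard normal-form argument'' alludes to.
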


\begin{proof}
By the previously mentioned argument, it suffices to show that $\gG^\prime_v$ is finitely generated for all $v\in V(\Lambda_0)$.
This follows by the argument used in \cite[Proof of Thm.~6, p.~567]{KM2} in connection
with \cite[Satz~5.8]{CRR} and the equivalent statement for HNN-extensions
(cf. \cite[Satz~6.3]{CRR}).
\end{proof}


\subsection{Limit groups of small rank}
\label{ss:lsr} 
As limit groups are (fully) residually free, they must be torsion free.
From this property one concludes the following useful fact.

\begin{fact}
\label{L0}
Let $G$ be a limit group with $\rk_{\Q}(G)=1$. Then $G\simeq \Z$.
\end{fact}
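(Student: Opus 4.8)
The plan is to argue by induction on the height $n=\het(G)$. If $n=0$ then $G$ is a free group of finite rank, so $\rk_{\Q}(G)=1$ forces $G\cong\Z$. Assume $n\ge 1$ and that the statement holds for limit groups of smaller height. By Proposition~\ref{prop:prin}, $G\cong\pi_1(\gG',\Lambda_0,\caT_0)$ for a finite connected graph $\Lambda_0$ whose vertex groups $\gG'_v$ are finitely generated abelian (hence, being subgroups of $G$, torsion free) or limit groups of height at most $n-1$, and whose edge groups are infinite cyclic or trivial. A non-trivial limit group maps onto a non-trivial free group (by residual freeness), hence onto $\Z$, so it has rational rank at least $1$; since $\rk_{\Q}$ is additive over free products, $G$ is not a non-trivial free product. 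Consequently we may discard every edge of $\gG'$ carrying a trivial edge group: deleting a trivial-edge-group tree edge would exhibit $G$ as a free product, forcing one side to have trivial fundamental group, and that side may be removed without affecting the remaining vertex groups; a trivial-edge-group non-tree edge would give $G\cong(\,\cdot\,)\star\Z$, forcing $G\cong\Z$. Thus we may assume all edge groups of $\gG'$ are infinite cyclic. If $\Lambda_0$ has no edges, then $G=\gG'_v$ is either finitely generated torsion-free abelian of rational rank $1$, hence $\cong\Z$, or a limit group of height $\le n-1$, hence $\cong\Z$ by induction. So assume $\Lambda_0$ has at least one edge; then every vertex is incident to an edge, and since an infinite cyclic edge group embeds into each incident vertex group, every $\gG'_v$ is non-trivial, whence $\rk_{\Q}(\gG'_v)\ge 1$.

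Next I would extract a homological bound. Build $G$ by amalgamating the vertex groups along the $|V(\Lambda_0)|-1$ edges of $\caT_0$ and then forming one HNN extension for each of the remaining edges, all of these operations being over infinite cyclic groups. The Mayer--Vietoris sequences with $\Q$-coefficients give
\[
\rk_{\Q}(G)=\dim_{\Q}H_1(G,\Q)\ \ge\ \sum_{v\in V(\Lambda_0)}\rk_{\Q}(\gG'_v)\ -\ |V(\Lambda_0)|+1\ \ge\ 1,
\]
the first inequality being the standard computation of $H_1$ for an amalgam or HNN extension over $\Z$, and the second using $\rk_{\Q}(\gG'_v)\ge 1$ for every $v$. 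Since $\rk_{\Q}(G)=1$, equality holds throughout; in particular $\rk_{\Q}(\gG'_v)=1$ for every $v$, whence $\gG'_v\cong\Z$ for every $v$ --- by the inductive hypothesis when $\gG'_v$ is a limit group of height $\le n-1$, and directly when $\gG'_v$ is finitely generated torsion-free abelian. Thus $G$ acts on the Bass--Serre tree of $\gG'$ with all vertex and edge stabilisers infinite cyclic.

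To finish I would exploit residual freeness through the following observation: if $g,h\in G$ satisfy $g^{p}=h^{q}\ne 1$ for some non-zero integers $p,q$, then $[g,h]=1$; indeed, were $[g,h]\ne 1$, residual freeness would give a homomorphism $\phi$ into a free group $F$ with $\phi([g,h])\ne 1$, hence $\phi(g)\ne 1$, hence $\phi(g)^{p}=\phi(h)^{q}\ne 1$, and $\phi(g),\phi(h)$ would both lie in the centraliser of this element, which is cyclic in $F$ --- a contradiction. Applying this to the endpoints of a lift of a non-loop edge $e$ of $\gG'$ --- for which $\langle\stab(\tilde v),\stab(\tilde w)\rangle\cong\langle x,y\mid x^{p}=y^{q}\rangle$ with $p,q\ne 0$ --- forces $[x,y]=1$; but $\langle x,y\mid x^{p}=y^{q}\rangle$ surjects onto $\Z/|p|\star\Z/|q|$, which is non-abelian once $|p|,|q|\ge 2$, so $|p|=1$ or $|q|=1$, i.e.\ $\gG'_e$ coincides with a vertex group and $e$ is collapsible. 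Collapsing $e$ again produces a graph of groups realising $G$ with all groups infinite cyclic, so the argument iterates; since $\Lambda_0$ is connected we may assume $\Lambda_0$ has a single vertex carrying $\langle a\rangle\cong\Z$ and $r\ge 0$ loops, so that $G=\langle a,t_1,\dots,t_r\mid t_ia^{p_i}t_i^{-1}=a^{q_i}\ (1\le i\le r)\rangle$ with all $p_i,q_i\ne 0$. If $r=0$ then $G=\langle a\rangle\cong\Z$. If $r\ge 1$, fix $i$: the $i$-th relation makes $\phi(a)^{p_i}$ and $\phi(a)^{q_i}$ conjugate in $F$ for every $\phi\colon G\to F$; as conjugate powers of a non-trivial element of a free group have equal exponent, either $p_i=q_i$ or $\phi(a)=1$, and since $a$ has infinite order the latter cannot hold for all $\phi$, so $p_i=q_i$. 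Then $t_i$ and $a$ both commute with $a^{p_i}\ne 1$, and the observation (again via cyclic centralisers in $F$) gives $[t_i,a]=1$ in $G$. Since these relations hold in $G$, there is a generator-preserving epimorphism $\Z\times F_r=\langle a,t_1,\dots,t_r\mid [t_i,a]\ (1\le i\le r)\rangle\twoheadrightarrow G$; and since $a$ is central in $\Z\times F_r$ and $p_i=q_i$, the defining relations of $G$ hold there too, giving an epimorphism in the opposite direction. Hence $G\cong\Z\times F_r$ and $\rk_{\Q}(G)=r+1>1$, contradicting $\rk_{\Q}(G)=1$. Therefore $r=0$, and $G\cong\Z$.

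The step I expect to be the crux is the last one: the homological counting leaves $G$ acting on a tree with infinite cyclic stabilisers --- essentially a generalised Baumslag--Solitar group --- and only the limit-group hypothesis, in the guise of residual freeness together with the structure of centralisers and of conjugate powers in free groups, rules out the surviving possibilities; the group $\langle a,t\mid tat^{-1}=a^{2}\rangle$ shows that nothing purely homological can close the argument.
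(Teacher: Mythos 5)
Your argument is correct, but it takes a vastly longer route than the paper, which disposes of the statement in five lines: if $G$ were non-abelian, pick $a,b$ with $[a,b]\neq 1$ and use residual freeness to get an \emph{epimorphism} $\phi\colon G\to F$ onto a free group of finite rank with $\phi([a,b])\neq 1$; surjectivity forces $\rk_{\Q}(F)\leq\rk_{\Q}(G)=1$, so $F\simeq\Z$ is abelian and $\phi([a,b])=1$, a contradiction; hence $G$ is finitely generated, torsion-free and abelian of rational rank $1$, i.e.\ $G\simeq\Z$. Everything you do is subsumed by that observation: your induction on height via Proposition~\ref{prop:prin}, the Mayer--Vietoris rank count forcing all vertex groups to be $\Z$, and the final analysis of the surviving generalised Baumslag--Solitar group all work, but along the way you are re-deriving standard consequences of full residual freeness (commutative transitivity of centralisers, the absence of $BS(p,q)$-relations with $p\neq q$) that the direct argument never needs. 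One small wrinkle in your reduction: the correct dichotomy for an edge with trivial edge group is ``separating versus non-separating'' in $\Lambda_0$, not ``tree edge versus non-tree edge'' --- a tree edge need not disconnect $\Lambda_0$, so deleting it need not split $G$ as a free product of the two tree-sides; since $\pi_1(\gG',\Lambda_0,\caT_0)$ is independent of the choice of maximal subtree, you can always re-choose $\caT_0$ to avoid a non-separating edge, and your conclusion (discard the edge, or $G\simeq\Z$) survives intact. The one thing your approach buys is a dry run of exactly the Bass--Serre/Mayer--Vietoris bookkeeping that the paper deploys for Theorem~\ref{thm:A}; as a proof of the Fact itself, the quotient-onto-a-free-group argument is strictly preferable.
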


\begin{proof}
Suppose that $G$ is non-abelian, i.e., there exist $a,b\in G$ with $[a,b]\neq 1$. 
Since $G$ is a limit group, there exists a free group $F$ of finite rank  and an  epimorphism $\phi\colon G\longrightarrow F$  satisfying $\phi([a,b])\neq 1$. 
As $\phi^{\ab}\colon G^{\ab}\to F^{\ab}$ is surjective, its induced map
$\phi^{\ab}_{\Q}\colon G^{\ab}\otimes_{\Z}\Q\to F^{\ab}\otimes_{\Z}\Q$ is also surjective.
Hence $1=\rk_{\Q}(G)\geq \rk_{\Q}(F)$,  and therefore $\rk_{\Q}(F)=1$. As 
$\rk_{\Q}(F)=d(F^{\ab})=d(F)=1$, $F$ must be cyclic, and, in particular, abelian.
Thus $\phi([a,b])=[\phi(a),\phi(b)]=1$, a contradiction. Hence $G$ is a finitely generated
free abelian group. 
In particular, $d(G)=\rk_{\Q}(G)=1$, and $G$ is cyclic. 
\end{proof}

The following property has been shown by
D.~Kochloukova in \cite{D}.

\begin{prop}[D.~Kochloukova]
\label{l4}
For a limit group $G$ its Euler characteristic $\chi(G)$ is non-positive. Moreover, 
$\chi(G)=0$ if, and only if,  $G$ is abelian. 
\end{prop}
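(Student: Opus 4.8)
The plan is to establish both assertions simultaneously by induction on the height $n=\het(G)$, using the graph-of-groups description of Proposition~\ref{prop:prin} and the additivity of the Euler characteristic. One first notes that a limit group is of type $F$, so that $\chi(G)\in\Z$ makes sense: this follows by the same induction, since finitely generated free abelian groups have finite classifying spaces and the fundamental group of a finite graph of groups with type-$F$ vertex and edge groups is again of type $F$. We may assume $G\neq 1$. For $n=0$, $G$ is free of rank $r\ge 1$ and $\chi(G)=1-r\le 0$, with equality precisely when $r=1$, i.e.\ $G\simeq\Z$.

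For the inductive step, write $G\simeq\pi_1(\gG^\prime,\Lambda_0,\caT_0)$ as in Proposition~\ref{prop:prin}. As all groups in sight are of type $F$, a Mayer--Vietoris argument on the associated finite graph of spaces yields
\[
\chi(G)=\sum_{v\in V(\Lambda_0)}\chi(\gG^\prime_v)-\sum_{e}\chi(\gG^\prime_e),
\]
the second sum ranging over geometric edges. An edge group is infinite cyclic ($\chi=0$) or trivial ($\chi=1$), and a cyclic edge group embeds into its two endpoint vertex groups, forcing these to be infinite. A vertex group is finitely generated free abelian --- of the form $\Z^k$, $k\ge 0$, since $G$ is torsion free --- with $\chi\in\{0,1\}$, the value $1$ occurring only for the trivial group, or else a limit group of height $\le n-1$ with $\chi\le 0$ by induction. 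To remove the trivial vertex groups (the only possible positive contributions) I would first normalise $\gG^\prime$: repeatedly collapse a maximal-tree edge $e=[u,v]$ along which $\gG^\prime_e\to\gG^\prime_u$ is surjective (this covers leaves carrying the trivial group and trivial vertices flanked by trivial edges); such a collapse changes neither $\pi_1$ nor $\chi$ and preserves all the structural features. After finitely many steps, either $G$ is free (the case of a single trivial vertex with loops, already handled), or every vertex group is infinite, so $\sum_v\chi(\gG^\prime_v)\le 0$ while $\sum_e\chi(\gG^\prime_e)\ge 0$, giving $\chi(G)\le 0$.

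The equality case carries the real weight. Suppose $\chi(G)=0$ with $G$ non-free; then in the normalised decomposition every vertex contribution vanishes, so by the inductive hypothesis every vertex group is free abelian $\Z^{k_v}$, $k_v\ge 1$, and every edge group is infinite cyclic (a trivial edge would make $\chi(G)<0$). Because $G$ is a limit group, its edge groups are maximal cyclic inside the vertex groups --- an isolated cyclic subgroup of the ambient i.e.c.f.\ group restricts to an isolated, hence maximal, cyclic subgroup of any intermediate subgroup --- so in the normalised form no $\Z$ edge group coincides with a vertex group, whence the two (possibly equal) endpoints of each edge have rank $\ge 2$. Now use commutative transitivity of limit groups (which follows from residual freeness, centralizers in free groups being cyclic): for an edge group $\langle c\rangle$ one has $C_G(c)\supseteq\langle\gG^\prime_u,\gG^\prime_v\rangle$, which is therefore abelian, whereas Britton's lemma for the amalgam or HNN-extension of $G$ cut out by that edge produces two non-commuting elements inside $\langle\gG^\prime_u,\gG^\prime_v\rangle$ (take an element of $\gG^\prime_u$ outside the relevant rank-one subgroups, available since $k_u\ge 2$) --- a contradiction. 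Hence the normalised graph has a single vertex, $G\simeq\Z^k$ is abelian, and the converse is trivial since $\chi(\Z^k)=0$. The main obstacle is exactly this last step: choosing the right normalisation and drawing the contradiction from commutative transitivity. (One can alternatively bypass the combinatorics by invoking that limit groups are $L^2$-acyclic in degrees $\ge 2$, so $\chi(G)=-b_1^{(2)}(G)$, and that $b_1^{(2)}$ vanishes for a limit group exactly when it is amenable, i.e.\ abelian.)
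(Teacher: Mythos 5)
First, a point of reference: the paper does not prove Proposition~\ref{l4} at all --- it is quoted from Kochloukova's article \cite{D} --- so there is no in-paper argument to compare yours with, and I am judging your proof on its own terms. Your strategy (induction on $\het(G)$ via Proposition~\ref{prop:prin}, Chiswell's additivity of $\chi$ over a finite graph of groups, then commutative transitivity for the equality case) is the natural one, and the inequality $\chi(G)\leq 0$ comes out correctly: after your normalisation every vertex group is infinite, so the vertex contributions are $\leq 0$ (using $\chi(\Z^k)=0$ and the inductive hypothesis) while the edge contributions are $\geq 0$.

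The gap is in the equality case, at the edges not lying in the maximal tree. Your contradiction rests on two claims: that $C_G(c)\supseteq\langle \gG'_u,\gG'_v\rangle$ for an edge group $\langle c\rangle$ with endpoints $u,v$, and that Britton's lemma produces two non-commuting elements \emph{inside} $\langle\gG'_u,\gG'_v\rangle$. Both hold for a tree edge, where $G$ contains the amalgam $\gG'_u\star_{\langle c\rangle}\gG'_v$ with both inclusions proper, and this correctly reduces the normalised graph to a single vertex. But for an HNN edge the element $c$ lies in $\gG'_u$ while $\gG'_v$ centralises $t\,c\,t^{-1}$ rather than $c$, so the first claim fails; and for a loop at $u$ one has $\langle\gG'_u,\gG'_v\rangle=\gG'_u\simeq\Z^{k}$, which is abelian, so Britton's lemma yields no non-commuting pair inside it --- the offending element is the stable letter $t$, which your argument never forces into $C_G(c)$. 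Hence the configuration ``one abelian vertex $\Z^k$ with loops carrying infinite cyclic edge groups'', i.e.\ pieces of the shape $\HNN_\phi(\Z^k,\Z,t)$, which also have $\chi=0$, is not excluded, and your text jumps from ``single vertex'' directly to ``$G\simeq\Z^k$''. The repair needs more than commutative transitivity: limit groups are CSA, so the maximal abelian subgroup $M\supseteq\gG'_u$ is malnormal; since $t$ conjugates one non-trivial element of $M$ to another, $M\cap tMt^{-1}\neq 1$ forces $t\in M$, so $t$ centralises $\gG'_u$, contradicting Britton's lemma whenever an edge inclusion is proper (the remaining case $\HNN_{\mathrm{id}}(\Z,\Z,t)\simeq\Z^2$ is abelian, and the orientation-reversing variant is a Klein-bottle group, again ruled out by CSA). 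With that inserted the proof closes; the maximal-cyclicity digression, by contrast, is dispensable, since properness of the edge inclusions --- which your normalisation already guarantees --- is all the amalgam step requires.
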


Limit groups with minimum number of generators less or equal to $3$ are well known. In \cite{B} the following was shown.

\begin{thm}[B.~Fine, A.~Gaglione, A.G.~Myasnikov, G.~Rosenberger, D.~Spellman]
\label{t3}
Let $G$ be a limit group. Then
\begin{itemize}
\item[(a)] $d(G)=1$ if, and only if, $G$ is infinite cyclic;
\item[(b)] $d(G)=2$ if, and only if, $G$ is a free group of rank $2$ or a free abelian group of rank $2$;
\item[(c)] $d(G)=3$ if, and only if, $G$ is a free group of rank $3$, a free abelian group of rank $3$, or an extension of centralizers of a free group of rank $2$, i.e, $G$ has a
presentation
\begin{equation}
\label{eq:free3}   
G=\langle\, x_1,x_2,x_3\mid x_3^{-1}\,v\,x_3=v\,\rangle,
\end{equation} 
where $v=v(x_1,x_2)\in F(\{x_1,x_2\})$ is non-trivial. 
\end{itemize}
\end{thm}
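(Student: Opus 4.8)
The plan is to treat the three values of $d(G)$ separately, disposing of the abelian cases at once and concentrating on the non-abelian ones. The ``if'' directions are routine: finitely generated free and free abelian groups are limit groups by Section~\ref{s:intro}; the group in~\eqref{eq:free3} is the extension of centralizers $F(x_1,x_2)(C,1)$, where $C$ is the maximal cyclic subgroup of $F(x_1,x_2)$ containing $v$, hence a limit group by~\eqref{eq:excen}; and in each case the minimum number of generators can be read off the abelianization --- for instance the group in~\eqref{eq:free3} has abelianization $\Z^3$ and is $3$-generated. For the ``only if'' directions one may assume $G$ is non-abelian, since an abelian limit group is torsion free, hence free abelian, with $d(G)$ equal to its rank; in particular, if $d(G)=1$ then $G$ is a non-trivial torsion-free cyclic group, so $G\simeq\Z$, which settles~(a).

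For $d(G)=2$ with $G$ non-abelian I would argue directly. If $\{a,b\}$ generates $G$ then $[a,b]\neq 1$, so by residual freeness there is an epimorphism $\varphi\colon G\to F$ onto a finitely generated free group $F$ with $[\varphi(a),\varphi(b)]\neq 1$; then $F=\langle\varphi(a),\varphi(b)\rangle$ is non-abelian and $2$-generated, hence free of rank $2$ with basis $\{\varphi(a),\varphi(b)\}$. The epimorphism $F(x,y)\to G$ sending $x\mapsto a$, $y\mapsto b$ becomes an isomorphism after composition with $\varphi$, so it is injective and $G\simeq F_2$. I expect no real difficulty here.

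The case $d(G)=3$ with $G$ non-abelian carries the real content. First I would apply Grushko's theorem to write $G=H_1\ast\cdots\ast H_k\ast F_s$ with the $H_i$ freely indecomposable, non-trivial and non-cyclic, so $d(H_i)\geq 2$ and $\sum_i d(H_i)+s=3$; as each $H_i$ is again a limit group, only three cases survive: $k=0$, giving $G\simeq F_3$; $k=1$, $s=1$, $d(H_1)=2$, in which case $H_1\simeq\Z^2$ by~(b) (it cannot be $F_2$, which is freely decomposable), so $G\simeq\Z^2\ast\Z$, the group~\eqref{eq:free3} with $v=x_1$; or $k=1$, $s=0$, i.e.\ $G$ itself is freely indecomposable with $d(G)=3$. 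In the last case $G$ is neither abelian nor free (a freely indecomposable non-cyclic group is never free), so $\het(G)\geq 1$ and, by Proposition~\ref{prop:prin} --- all of whose edge groups are here non-trivial, hence infinite cyclic, since $G$ is freely indecomposable --- we may write $G\simeq\pi_1(\gG',\Lambda_0,\caT_0)$ with $\Lambda_0$ a finite connected graph, every edge group infinite cyclic, and every vertex group either finitely generated abelian or a limit group of strictly smaller height; after collapsing redundant vertices, each vertex group properly contains every adjacent edge group.

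The remaining and main task is to identify such a decomposition when $d(G)=3$. I would run a case analysis on the graph $\Lambda_0$, combining: $\rk_{\Q}(G)\leq d(G)=3$ together with the Mayer--Vietoris sequence of the graph of groups and Fact~\ref{L0} (a limit group not isomorphic to $\Z$ has rational rank $\geq 2$, while an infinite cyclic edge group has rational rank $1$); a count of the stable letters, bounding the first Betti number of $\Lambda_0$; and the Euler characteristic $\chi(G)$, which by Proposition~\ref{l4} is negative and, being additive over the graph of groups with value $0$ on infinite cyclic and on finitely generated abelian groups, equals the sum of the negative Euler characteristics of the non-abelian vertex groups, limiting their number and size. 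Together these should force $\Lambda_0$ to be a single edge between two vertices, or a single loop, with the non-cyclic vertex groups of rational rank $2$ and hence, by~(b), equal to $F_2$ or $\Z^2$. It then remains to eliminate the configurations that are not limit groups or that need more than three generators: $\Z^2\star_{\Z}\Z^2$ and its HNN-analogue are isomorphic to $\Z\times F_2$, which is non-abelian with Euler characteristic $0$ and so, by Proposition~\ref{l4}, not a limit group; and the double $F_2\star_{\Z}F_2$ --- which for $G$ to stay freely indecomposable must amalgamate a cyclic subgroup that is non-primitive in each factor --- needs four generators. The survivors are $\Z^2\star_{\Z}F_2$ and the HNN-extension $F_2\star_{\Z}$ over an infinite cyclic subgroup, both presented as in~\eqref{eq:free3}. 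I expect the combinatorial bookkeeping that bounds the size of $\Lambda_0$, and the determination of exactly which cyclic subgroups of a free vertex group may occur as edge groups, to be the genuinely delicate points.
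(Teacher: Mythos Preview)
The paper does not prove Theorem~\ref{t3}; it is quoted as a result from \cite{B} (Fine, Gaglione, Myasnikov, Rosenberger, Spellman), so there is no ``paper's own proof'' to compare against. Your write-up is therefore not a reconstruction of anything in the present paper but an independent sketch of the classification.

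On its own merits, your treatment of (a) and (b) is clean and correct; the Hopfian argument for the $2$-generated non-abelian case is standard and sound. For (c), the overall architecture --- Grushko, then analyse a freely indecomposable $3$-generated limit group via the graph-of-groups decomposition of Proposition~\ref{prop:prin}, using $\rk_{\Q}$, Euler characteristic and Fact~\ref{L0} to bound the graph --- is reasonable and in the spirit of \cite{B}. However, two steps are genuinely incomplete as written. First, your claim that the surviving HNN configuration ``$F_2\star_{\Z}$'' is presented as in~\eqref{eq:free3} is not justified: an HNN-extension $\langle F_2,t\mid t\,c\,t^{-1}=\phi(c)\rangle$ with $\phi(c)\neq c$ is not of that form, and you have not explained why the limit-group hypothesis forces $\phi(c)$ and $c$ to be conjugate in $F_2$ (so that after changing $t$ one obtains a centraliser extension). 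Second, your elimination of $F_2\star_{\Z}F_2$ as requiring four generators is asserted rather than proved; for general amalgamating words this is exactly the delicate point (cf.\ Remark~\ref{rem:notab}), and you would need to argue that every $3$-generated such amalgam already falls under~\eqref{eq:free3}. You also assume without argument that the edge groups sit as direct summands in the abelian vertex groups. These are precisely the ``delicate points'' you flag yourself, and they carry the real work of the theorem in \cite{B}.
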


By \cite[Thm.~3.1]{BK}, there are only few limit groups with the \IFP-property.

\begin{thm}[B.~Fine, O.G.~Kharlampovich, A.G.~Myasnikov, V.N.~Remeslennikov, G.~Rosenberger]
\label{T1}
Let $G$ be a limit group with the $\textbf{IF}$-property. Then $G$ is a free group or a cyclically pinched or a conjugacy pinched one-relator group. 
\end{thm}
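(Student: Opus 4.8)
The plan is to show that the \IFP-property is restrictive enough to force $G$ to admit a one-edge cyclic splitting, which then translates directly into the three listed forms.

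First I would dispose of the free and abelian cases and reduce to a freely indecomposable group. If $G$ is free we are done, so assume otherwise. If $G=A\star B$ is a nontrivial free product, then, since limit groups are torsion free and hence $A,B$ are infinite, both $A$ and $B$ have infinite index in $G$; by the \IFP-property they are free, whence $G=A\star B$ is free, contrary to assumption. Thus $G$ is freely indecomposable. For the abelian case, a free abelian group of rank $\geq 3$ contains a copy of $\Z^2$ of infinite index, which is not free; so the \IFP-property forces an abelian limit group to be $\Z$ (free) or $\Z^2$, and $\Z^2=\langle x,t\mid t^{-1}xt=x\rangle$ is conjugacy pinched. From now on $G$ is non-abelian, non-free and freely indecomposable, and in particular $\het(G)=n\geq 1$.

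Next I would extract a cyclic splitting with free vertex groups. By Proposition~\ref{prop:prin}, $G\simeq\pi_1(\gG',\Lambda_0,\caT_0)$ over a finite connected graph $\Lambda_0$, with each vertex group finitely generated abelian or a limit group of height $\leq n-1$, and each edge group infinite cyclic or trivial. Since $G$ is freely indecomposable, no edge group can be trivial (collapsing such an edge would exhibit a nontrivial free-product decomposition), so every edge group is infinite cyclic. After passing to a reduced (minimal) splitting every vertex group is a proper subgroup of infinite index, hence free by the \IFP-property; in particular an abelian vertex group must be $\Z$ or trivial, since $\Z^k$ with $k\geq 2$ is not free. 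Thus $G$ is the fundamental group of a finite graph of free groups with infinite cyclic edge groups.

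The heart of the argument, and the step I expect to be the main obstacle, is to show that this graph has exactly one edge. I would argue by contradiction: if the reduced graph carries at least two edges, a Bass--Serre / Karrass--Solitar analysis of the action of $G$ on its Bass--Serre tree produces a proper ``sub-graph-of-groups'' subgroup $H\leq G$ that is a nontrivial reduced amalgam $A\star_C B$ or HNN-extension $A\star_C$ of free groups over an infinite cyclic $C$ and that has infinite index in $G$. Such an $H$ is not free; this is where reducedness is essential, since if the cyclic edge group embeds as part of a basis of an adjacent vertex group the amalgam folds to a genuine free factor, so the edge must be essential, after which non-freeness can be certified (e.g.\ by exhibiting a $\Z^2$ or surface subgroup, or via a nontrivial relation in $H^{\ab}$ or $H_2(H,\Q)\neq 0$). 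An infinite-index non-free subgroup contradicts the \IFP-property, so the graph has a single edge. If that edge is a loop, $G$ is an HNN-extension $A\star_C$ of a free group over $\Z$, i.e.\ a conjugacy pinched one-relator group $\langle x_1,\dots,x_p,t\mid t^{-1}Ut=V\rangle$; if it joins two distinct vertices, $G=A\star_C B$ is an amalgam of two free groups over $\Z$, i.e.\ a cyclically pinched one-relator group $\langle x_1,\dots,x_p,y_1,\dots,y_q\mid U=V\rangle$. This exhausts the possibilities. The delicate points to pin down are the reduction to a reduced cyclic splitting and the precise choice of $H$ together with a robust certificate of its non-freeness; everything else is bookkeeping around Proposition~\ref{prop:prin} and the \IFP-property.
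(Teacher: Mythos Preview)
The paper does not prove Theorem~\ref{T1}; it simply quotes the result, attributing it to \cite[Thm.~3.1]{BK}. There is therefore no ``paper's own proof'' to compare your proposal against.

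As for your outline itself: the reduction steps (freely indecomposable, abelian vertex groups collapse to $\Z$, all edge groups cyclic, all vertex groups free of infinite index) are sound and track the standard approach. The acknowledged gap is real, however. Your plan for the one-edge reduction is to locate, inside a multi-edge graph of free groups with cyclic edges, an infinite-index subgroup $H$ that is itself a nontrivial amalgam or HNN over $\Z$ and then certify that $H$ is not free. Two issues: (i) a sub-graph-of-groups carried by a proper connected subgraph need not have infinite index in $G$ (for instance, in an amalgam $A\star_C B$ the subgroup $A$ already corresponds to a one-vertex subgraph but the \emph{two}-vertex graph gives all of $G$); you would need to pass to a cover or take a carefully chosen conjugate configuration, and (ii) an amalgam of free groups over $\Z$ can be free even when reduced in the graph-of-groups sense, so your proposed certificates (``exhibiting a $\Z^2$ or surface subgroup, or via \ldots $H_2(H,\Q)\neq 0$'') each require a separate argument that does not follow from reducedness alone. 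The published proof in \cite{BK} handles this differently, exploiting the JSJ/abelian decomposition theory for limit groups rather than an ad hoc subgroup construction; if you want a self-contained route you will need to supply a concrete construction of $H$ and a genuine non-freeness criterion, which is more work than your sketch suggests.
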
  

\section{Generators of  normal subgroups of limit groups}
\label{nsb}
The following fact will turn out to be useful for our purpose.

\begin{Lemma}
\label{l2}
Let $G_1$ and  $G_2$ be  groups, and let $C=\langle c \rangle$ be an infinite cyclic group or the trivial group.
\begin{itemize}
\item[(a)] If $G=G_1\star_{C}G_2$ is a free product with amalgamation in $C$, then 
\begin{equation}
\label{eq:baslem1}
\rk_{\Q}(G)=\rk_{\Q}(G_1)+\rk_{\Q}(G_2)-\rho(G),
\end{equation}
where $\rho(G) \in \{0,1\}$. Moreover, if $C=1$, then $\rho(G)=0$. 
\item[(b)] If $G=\HNN_\phi(G_1,C,t)=\langle\, G_1,t\mid t\,c\, t^{-1}=\phi(c)\,\rangle$ is an HNN-extension with 
equalization in $C\subseteq G_1$, then 
\begin{equation}
\label{eq:baslem2}
\rk_{\Q}(G)=\rk_{\Q}(G_1)+\rho(G),
\end{equation} 
where $\rho(G)\in\{0,1\}$. One has an exact sequence 
\begin{equation}
\label{eq:baslem3}
\xymatrix{
C\otimes_{\Z}\Q\ar[r]^-{\alpha}& 
G_1^{\ab}\otimes_{\Z}\Q \ar[r]^-{\beta}&
G^{\ab}\otimes_{\Z}\Q  \ar[r]&
\Q\ar[r]&0,
}
\end{equation}
where $\alpha(c\otimes q)=(c\phi(c)^{-1}G_1^{'})\otimes q$, $q\in \Q$. Moreover, 
\begin{itemize}
\item[(1)] $\rho(G)=0$ if, and only if, $\alpha$ is injective;
\item[(2)] $\rho(G)=1$ if, and only if, $\alpha$ is the $0$-map. 
\end{itemize}
\end{itemize}
\end{Lemma}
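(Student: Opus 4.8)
The plan is to read off the abelianizations directly from the standard presentations of the two constructions and then apply $-\otimes_{\Z}\Q$, which is exact; the rational-rank formulas and the exact sequence \eqref{eq:baslem3} then fall out of elementary linear algebra. (Equivalently one could use the Mayer--Vietoris sequences in group homology with $\Q$-coefficients associated with $G_1\star_C G_2$ and with $\HNN_\phi(G_1,C,t)$, whose $H_0$-parts collapse since all groups involved are connected; this produces the same sequences.) For part~(a), writing the amalgam as $G_1\star_C G_2=(G_1\star G_2)/\langle\!\langle\,\iota_1(c)\iota_2(c)^{-1}\,\rangle\!\rangle$, where $\iota_i\colon C\to G_i$ are the two inclusions and where for $C=1$ no relator is present, abelianization gives
\[
G^{\ab}\;\cong\;\bigl(G_1^{\ab}\oplus G_2^{\ab}\bigr)\big/\bigl\langle\,(\iota_1^{\ab}(c),\,-\iota_2^{\ab}(c))\,\bigr\rangle .
\]
Tensoring with $\Q$, the group $G^{\ab}\otimes_{\Z}\Q$ is the quotient of $(G_1^{\ab}\otimes_{\Z}\Q)\oplus(G_2^{\ab}\otimes_{\Z}\Q)$ by the subspace spanned by the single vector $w=(\iota_1^{\ab}(c)\otimes 1,\,-\iota_2^{\ab}(c)\otimes 1)$. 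Hence $\rk_{\Q}(G)=\rk_{\Q}(G_1)+\rk_{\Q}(G_2)-\dim_{\Q}(\Q w)$; setting $\rho(G)=\dim_{\Q}(\Q w)$ gives \eqref{eq:baslem1}, and $\rho(G)\in\{0,1\}$, with $\rho(G)=0$ whenever $C=1$ (no relator), or more generally whenever $w=0$.

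For part~(b), from $\HNN_\phi(G_1,C,t)=(G_1\star\langle t\rangle)/\langle\!\langle\,t\,c\,t^{-1}\phi(c)^{-1}\,\rangle\!\rangle$ and the observation that the relator abelianizes to $c\,\phi(c)^{-1}$ (the contributions of $t$ and $t^{-1}$ cancel), one gets
\[
G^{\ab}\;\cong\;\bigl(G_1^{\ab}\oplus\Z\bigr)\big/\bigl\langle\,(\,c\,\phi(c)^{-1}G_1',\,0\,)\,\bigr\rangle ,
\]
the $\Z$-summand being generated by the image of $t$. Tensoring with $\Q$ exhibits $G^{\ab}\otimes_{\Z}\Q$ as the quotient of $(G_1^{\ab}\otimes_{\Z}\Q)\oplus\Q$ by the line spanned by $(\alpha(c\otimes 1),0)$, with $\alpha$ as in the statement; projection onto the $\Q$-summand is therefore a well-defined surjection $G^{\ab}\otimes_{\Z}\Q\to\Q$ whose kernel is $\image\beta$, where $\beta$ is induced by $G_1^{\ab}\hookrightarrow G^{\ab}$, and $\kernel\beta=\image\alpha$. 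This is precisely the exact sequence \eqref{eq:baslem3}. A dimension count yields $\rk_{\Q}(G)=\rk_{\Q}(G_1)+1-\rk(\alpha)$, i.e.\ $\rho(G)=1-\rk(\alpha)$; since $\dim_{\Q}(C\otimes_{\Z}\Q)\le 1$ we have $\rk(\alpha)\in\{0,1\}$, whence $\rho(G)\in\{0,1\}$. For $C$ infinite cyclic, $C\otimes_{\Z}\Q\cong\Q$, so $\alpha$ is injective iff $\rk(\alpha)=1$ iff $\rho(G)=0$, which is~(1), and $\alpha$ is the zero map iff $\rk(\alpha)=0$ iff $\rho(G)=1$, which is~(2); the case $C=1$ reduces to $G=G_1\star\Z$, where $\alpha$ is the zero map and $\rho(G)=1$, consistent with~(2).

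There is no serious obstacle here; the two points requiring a little care are (i) checking that the stable letter $t$ survives as a free $\Q$-summand of $G^{\ab}\otimes_{\Z}\Q$ --- this is what produces the ``$+1$'' in \eqref{eq:baslem2} and the $\Q$ at the right-hand end of \eqref{eq:baslem3}, and it relies on the relator no longer involving $t$ after abelianization --- and (ii) matching the connecting map of the Mayer--Vietoris sequence (equivalently, the image of the relator) with the explicit map $\alpha$ and verifying exactness at each term. Both are routine once the presentations are written down.
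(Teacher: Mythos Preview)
Your proof is correct and is essentially the same as the paper's. The paper invokes the Mayer--Vietoris sequences for $H_\bullet(\argu,\Q)$ associated to the amalgam and to the HNN-extension and then observes that the maps into the $H_0$-terms vanish; your direct computation of $G^{\ab}$ from the standard presentations, followed by tensoring with $\Q$, reproduces exactly the low-degree portion of those sequences (as you yourself note), and the remaining dimension counts are identical.
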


\begin{proof}
(a) Let $G=G_1\star_{C}G_2$, and let $\iota_i\colon C\to G_i$, $i\in\{1,2\}$ denote the associated embeddings. 
The Mayer-Vietoris sequence associated to 
$-\otimes_{\Z}\Q$ specializes to an exact sequence
\begin{equation}
\label{eq:baslem4}
\xymatrix{
&H_1(C,\Q)\ar[r]^-{\alpha}& 
H_1(G_1,\Q)\oplus H_1(G_2,\Q)\ar[r]^-{\beta}&
H_1(G,\Q)\ar[d]^\gamma\\
0& H_0(G,\Q)\ar[l]&H_0(G_1,\Q)\oplus H_0(G_2,\Q)\ar[l]&H_0(C,\Q)\ar[l]
}
\end{equation} 
where $\alpha(c\otimes q)=((\iota_1(c)G_1^{\prime})\otimes q,-(\iota_2(c)G_2^{\prime})\otimes q)$, $q\in \Q$.
In particular, $\gamma=0$, and this yields (a).

\noindent
(b) In this case the Mayer-Vietoris sequence specializes to
\begin{equation}
\label{eq:baslem5}
\xymatrix{
&H_1(C,\Q)\ar[r]^-{\alpha}& 
H_1(G_1,\Q)\ar[r]^-{\beta}&
H_1(G,\Q)\ar[d]\\
0& H_0(G,\Q)\ar[l]&H_0(G_1,\Q)\ar[l]&H_0(C,\Q)\ar[l]_-{\delta}
}
\end{equation} 
with $\alpha$ as described in (b) of the statement of the lemma.
In particular, $\delta=0$ which yields \eqref{eq:baslem3}, and thus also \eqref{eq:baslem2}. The final
remarks (1) and (2) follow from the fact that
$\dim(\image(\alpha))\in\{0,1\}$, and that $\dim(\image(\alpha))=1$ if, and only if, $\alpha$ is injective.
\end{proof}

From Lemma \ref{l2} one concludes the following. 

\begin{thm}
\label{thm:A}
Let  $G$ be a non-abelian limit group, and let $U$ be a normal subgroup  of  $G$ of  prime index $p$. 
Then $\rk_{\Q}(U)> \rk_{\Q}(G)$.
\end{thm}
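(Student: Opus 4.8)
The plan is to pass to twisted homology and induct on the height $\het(G)$. Regard $\Q[G/U]$ as a $\Q G$-module by left translation, so that $G$ acts through $G/U\simeq\Z/p$; let $\varepsilon\colon\Q[G/U]\to\Q$ be the augmentation and $M=\kernel\varepsilon$ (the reduced regular representation, $\dim_\Q M=p-1$). By Shapiro's lemma $H_1(U,\Q)\simeq H_1(G,\Q[G/U])\simeq H_1(G,\Q)\oplus H_1(G,M)$, so $\rk_\Q(U)-\rk_\Q(G)=\dim_\Q H_1(G,M)$, and it suffices to prove $H_1(G,M)\neq 0$. Since $\Q[G/U]$ is semisimple, $0\to M\to\Q[G/U]\to\Q\to 0$ splits over $\Q[G/U]$, so $H_0(G,M)=0$; this vanishing will be used several times. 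For the base case $\het(G)=0$: then $G$ is free of some rank $r\geq 2$ (as $G$ is non-abelian), $U$ is free of rank $1+p(r-1)$ by the Nielsen--Schreier formula, and $1+p(r-1)>r$ because $(p-1)(r-1)>0$.

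For the inductive step I would first reduce to $G$ freely indecomposable. If $G=H_1\star\cdots\star H_m$ with $m\geq 2$ and each $H_i$ nontrivial and finitely generated (hence a limit group), the Mayer--Vietoris sequence of this free product with coefficients in $M$, together with $H_0(G,M)=0$, gives
\[
\dim_\Q H_1(G,M)=\sum_{i=1}^m\dim_\Q H_1(H_i,M)+(p-1)\Bigl(m-1-\#\{\,i:H_i\subseteq U\,\}\Bigr).
\]
Since the $H_i$ generate $G$ and $G\neq U$, some $H_i$ is not contained in $U$, so the last summand is $\geq 0$; and if some $H_j$ is contained in $U$ then $M$ restricts to a trivial $\Q H_j$-module, so $\dim_\Q H_1(H_j,M)=(p-1)\rk_\Q(H_j)\geq p-1$ (a nontrivial limit group has positive rational rank, by residual freeness). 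Either way $\dim_\Q H_1(G,M)\geq p-1\geq 1$. Hence we may assume $G$ freely indecomposable; as $G$ is non-abelian this forces $\het(G)=n\geq 1$.

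Now present $G=\pi_1(\gG,\Lambda,\caT)$ as in Proposition~\ref{prop:prin}, and collapse every non-loop edge whose edge group equals an adjacent vertex group, so that the decomposition is reduced (this keeps each vertex group finitely generated abelian or a limit group of height $\leq n-1$). Because $G$ is freely indecomposable, no edge group is trivial, so every edge group is infinite cyclic and every vertex group is a nontrivial limit group. The crucial step is that \emph{some vertex group $\gG_{v^\ast}$ is non-abelian}: otherwise, if $\Lambda$ has a non-loop edge $e$ one gets the finitely generated subgroup $\gG_{o(e)}\star_{\gG_e}\gG_{t(e)}\leq G$, and if every edge is a loop (so $\Lambda$ is a single vertex $v$, carrying at least one loop $e$, as otherwise $G=\gG_v$ would have height $<n$) one gets $\HNN_\phi(\gG_v,\gG_e,t_e)\leq G$; but a non-trivial amalgam, respectively HNN extension, of abelian groups over an infinite cyclic subgroup is not commutative transitive, contradicting that subgroups of limit groups are limit groups. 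Feeding $M$ into the long exact homology sequence of the graph of groups, using $H_0(G,M)=0$ and that $H_j(\gG_e,M)=0$ for all $j$ whenever $\gG_e\not\subseteq U$ (the generator of such a $\gG_e$ acts on $M$ with neither nonzero invariants nor coinvariants), the edge terms cancel and one obtains
\[
\dim_\Q H_1(G,M)\ \geq\ (p-1)\sum_{\gG_v\subseteq U}\bigl(\rk_\Q(\gG_v)-1\bigr)\ +\sum_{\gG_v\not\subseteq U}\bigl(\rk_\Q(\gG_v\cap U)-\rk_\Q(\gG_v)\bigr).
\]
Every $\gG_v$ is a nontrivial limit group, so $\rk_\Q(\gG_v)\geq 1$ by Fact~\ref{L0}; and for $\gG_v\not\subseteq U$ one has $\gG_v\cap U\trianglelefteq\gG_v$ of index $p$, whence $\rk_\Q(\gG_v\cap U)\geq\rk_\Q(\gG_v)$ — trivially when $\gG_v$ is abelian, and by the inductive hypothesis (Theorem~\ref{thm:A} in height $\leq n-1$) when $\gG_v$ is non-abelian. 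So every summand is $\geq 0$. Finally the summand attached to $\gG_{v^\ast}$ is $\geq 1$: if $\gG_{v^\ast}\subseteq U$ then $\rk_\Q(\gG_{v^\ast})\geq 2$ by Fact~\ref{L0} (it is a non-abelian limit group), and if $\gG_{v^\ast}\not\subseteq U$ then $\rk_\Q(\gG_{v^\ast}\cap U)>\rk_\Q(\gG_{v^\ast})$ by the inductive hypothesis. Hence $\dim_\Q H_1(G,M)\geq 1$, completing the induction.

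The main obstacle I anticipate is precisely the step ``some vertex group is non-abelian'': the dimension count by itself does not rule out a reduced, freely indecomposable decomposition with all vertex groups abelian, and eliminating that possibility relies on the commutative-transitivity (CSA) rigidity of limit groups rather than on anything homological. A second point needing care is the bookkeeping of how $U$ meets the graph-of-groups structure: one must check that the reduction to a reduced decomposition does not increase heights, and identify which vertex and edge groups of the induced decomposition of $U$ are copies of those of $G$ and which are the index-$p$ subgroups $\gG_v\cap U$ — the inductive hypothesis being invoked only on the latter. (Equivalently, and more in the spirit of the paper, one may avoid $M$ altogether and instead compute $\rk_\Q(G)$ and $\rk_\Q(U)$ by iterating Lemma~\ref{l2} along the two decompositions and comparing the results; the combinatorics, and the obstruction, are the same.)
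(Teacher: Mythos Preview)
Your proof is correct and takes a genuinely different route from the paper's. The paper proves Theorem~\ref{thm:A} by a lengthy direct case analysis: it inducts on $\het(G)$ and on the size of the underlying graph, reducing to $G=G_1\star_C G_2$ or $G=\HNN_\phi(G_1,C,t)$, then describes the quotient graph $U\backslash T$ explicitly (Cases I.1, I.2(a)--(c), II.1, II.2(a)--(b)) and in each subcase computes $\rk_\Q(U)$ and $\rk_\Q(G)$ separately via Lemma~\ref{l2} and compares. Your approach packages the difference $\rk_\Q(U)-\rk_\Q(G)=\dim_\Q H_1(G,M)$ once and for all via Shapiro, and then a single Mayer--Vietoris computation with coefficients in $M$ handles all the subcases simultaneously; the cancellation of the edge contributions in your displayed inequality is exactly what the paper's many case distinctions are checking by hand. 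This is cleaner and more conceptual. The paper's approach, on the other hand, yields explicit numerical lower bounds on $\rk_\Q(U)$ in each subcase (e.g.\ \eqref{eq:mt8}, \eqref{eq:casc3}, \eqref{eq:hnn11}), which it then reuses in the proofs of Lemmas~\ref{r2} and~\ref{r1} for the \IFP-case.

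One point needs tightening. Your justification that ``some vertex group is non-abelian'' via commutative transitivity is clean for the amalgam but not quite as stated for the HNN case: the assertion ``a nontrivial HNN extension of an abelian group over an infinite cyclic subgroup is not CT'' fails when the extension is itself abelian (e.g.\ $\Z\times\Z$), and your reduction does not exclude $\gG_e=\gG_v$ at loops. The fix is either to use the full CSA property (malnormality of maximal abelian subgroups forces $t_e$ into the maximal abelian subgroup containing $\gG_v$, whence $G$ is abelian), or --- simpler, and exactly what the paper does --- to observe that if every vertex group is abelian then $\chi(G)=\sum_v\chi(\gG_v)-\sum_e\chi(\gG_e)=0$, so $G$ is abelian by Proposition~\ref{l4}.
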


\begin{proof}
We proceed by induction on $n=\het(G)$ (cf. \S\ref{ss:hgt}).
If $n=0$, then $G$ is a finitely generated free group satisfying $d(G)\geq 2$, and the
Nielsen-Schreier theorem  yields 
\begin{equation}
\label{eq:mt1}
\rk_{\Q}(U)=d(U)=p\cdot(d(G)-1)+1> d(G)=\rk_{\Q}(G)
\end{equation} 
and hence the claim.
So assume that $G$ is a limit group of height $\het(G)=n\geq 1$, and that the claim holds
for all limit groups of height less or equal to $n-1$. By Proposition~\ref{prop:prin},
$G$ is isomorphic to the fundamental group $\pi_1(\Upsilon,\Lambda,\caT)$
of a graph of groups $\Upsilon$ based on a finite connected graph $\Lambda$ which edge groups are either infinite cyclic or trivial, and which vertex groups are either limit group of height at most $n-1$ or free abelian groups. 
Applying induction on $s(\Lambda)=|V(\Lambda)|+|E(\Lambda)|$ it suffices to consider
the following two cases:

\begin{itemize}
\item[(I)] $G=G_1 \star_{C} G_2$ and $G_i$ is either a limit group of height
at most $n-1$ or abelian, and $C$ is either infinite cyclic or trivial, $i\in\{1,2\}$;
\item[(II)] $G=\HNN_\phi(G_1,C,t)$ where $G_1$ is either a limit group of height
at most $n-1$ or abelian, and $C$ is either infinite cyclic or trivial.
\end{itemize}

\noindent
{\bf Case I:} Let  $G=G_1\star_CG_2$. If  $C$ is non-trivial, then either $G_1$ or $G_2$ is non-abelian. Otherwise, by a result of I.~Chiswell, one would conclude that
$\chi(G)= \chi(G_1)+\chi(G_2)-\chi(C)=0$ and $G$ must be abelian
(cf. Prop.~\ref{l4}) which was excluded by hypothesis.
The group $G$ acts naturally on a tree $T$ without inversion of edges with two orbits 
$V_1$ and $V_2$ on $V(T)$ and two orbits $E_1$ and $E_2$ on $E(T)$
satisfying $\bar{E_1}=E_2$. We may assume that for $e\in E_1$ one has
$o(e)\in V_1$ and $t(e)\in V_2$. By hypothesis,
\begin{equation}
\label{eq:mt2}
|G:U\,G_i|\in\{1,p\}\qquad\text{and}\qquad |G:U\,C|\in\{1,p\},
\end{equation}
and one may distinguish the following cases:
\begin{itemize}
\item[(I.1)] $|G:U\,C|=1$, i.e., $U$ has one orbit on $E_1$ and $E_2$;
\item[(I.2)] $|G:U\,C|=p$, i.e., $U$ has $p$ orbits on $E_1$ and $E_2$.
\end{itemize}
{\bf Case I.1:}  
The hypothesis implies that $G=U\,G_1=U\,G_2$, and 
$U$ has one orbit on $V_1$ and one orbit on $V_2$.
Thus, by the fundamental theorem of Bass-Serre theory 
(cf. \cite[\S I.5.4, Thm.~13]{ser:trees}),
one has that $U\simeq (U\cap G_1)\star_{(U\cap C)} (U\cap G_2)$.
The hypothesis implies also that $|C:C\cap U|=p$, i.e., $C\not=1$.
Hence without loss of generality we may assume that $G_1$ is non-abelian, and,
by induction, 
$\rk_{\Q}(U\cap G_1)\geq 1+\rk_{\Q}(G_1)$. If $G_2$ is also non-abelian, then, 
by induction, $\rk_{\Q}(U\cap G_2)\geq\rk_{\Q}(G_2)+1$.
Hence,
by applying Lemma \ref{l2}(a), one concludes that
\begin{equation}
\label{eq:mt3}
\rk_{\Q}(U)\geq \rk_{\Q}(U\cap G_1)+ \rk_{\Q}(U\cap G_2)-1
>\rk_{\Q}(G_1)+\rk_{\Q}(G_2)
\geq \rk_{\Q}(G).
\end{equation}
On the other hand, if $G_2$ is abelian,
$C$ is a direct factor in $G_2$, i.e., $G_2\simeq \Z\times B$, where $B$ is a free abelian group of rank $d(G_2)-1$. Thus the map $\alpha$ in \eqref{eq:baslem4} is injective,
and one has $\rk_{\Q}(G)=\rk_{\Q}(G_1)+\rk_{\Q}(G_2)-1$.
Moreover, $\rk_{\Q}(U\cap G_2)=\rk_{\Q}(G_2)$
and Lemma~\ref{l2}(a) yields
\begin{equation}
\label{eq:mt4}
\rk_{\Q}(U)\geq \rk_{\Q}((U\cap G_1)+ \rk_{\Q}(U\cap G_2)-1\geq 
\rk_{\Q}(G_1)+rk_{\Q}(G_2)>\rk_{\Q}(G).
\end{equation}
{\bf Case I.2:}  
In this case $U$ has $p$ orbits on $E_1$ and $p$ orbits on $E_2$.
Moreover, $|G:UG_i|\in\{1,p\}$. Hence, by \eqref{eq:mt2}, it suffices to consider the following three cases
\begin{itemize}
\item[{\bf (a)}] $|G:UG_1|=|G:UG_2|=p$;
\item[{\bf (b)}] $|G:UG_1|=p$ and $G=UG_2$;
\item[{\bf (c)}] $G=UG_1=UG_2$.
\end{itemize}
Let $\Lambda=G\dbs T$ and $\tLambda= U\dbs T$ be the quotient graphs of $T$ modulo the left $G$- and $U$-action, respectively.
In particular, $\Lambda=(\{v,w\},\{\be,\beb\})$ is a line segment of length~$1$
and thus a tree, and $\tLambda$ is connected. Moreover, 
one has a surjective homomorphism of graphs $\pi\colon \tLambda\to\Lambda$.

\noindent
{\bf Case (a):}  
By hypothesis, $UG_1=UG_2=U$, i.e., $G/U$ acts regularly on the
vertex fibres and edge fibres of $\pi$.
For $\bof\in E(\tLambda)$ one has either $o(\bof)\in\pi^{-1}(\{v\})$
or $t(\bof)\in\pi^{-1}(\{v\})$. If $\bof_1$ and $\bof_2$ satisfy the first condition,
and $\tv=o(\bof_1)=o(\bof_2)$, then  $\tg.\bof_1=\bof_2$ for 
$\tg\in\stab_{G/U}(\tv)=1$. Hence $\bof_1=\bof_2$. In the latter case the same argument applies, and this shows that $\pi$ is a fibration, i.e., 
for every $z\in V(\tLambda)$ the map
$\pi_z\colon \st_{\tLambda}(z)\to\st_{\Lambda}(\pi(z))$
is a bijection. As $\Lambda$ is a tree and hence simply-connected, this implies
that $\pi$ is a bijection, a contradiction, showing that Case~(a) is impossible. 

\noindent
{\bf Case (b):} We may assume that $G_1=\stab_G(v)$ and
$G_2=\stab_G(w)$. Hence, by hypothesis,
$|\pi^{-1}(\{v\})|=p$ and $|\pi^{-1}(\{w\})|=1$.
Let $\tw\in V(\tLambda)$ with $\pi(\tw)=w$.
Then $E(\tLambda)=\st_{\tLambda}(\tw)\sqcup \overline{\st_{\tLambda}(\tw)}$
is a star with $p$ geometric edges.

Put $U_2=U\cap G_2$. By hypothesis, $|G_2:U_2|=p$ and $G_1\subseteq U$. 
Choosing a set of representatives $\caR\subseteq G_2$ for $G_2/U_2$,
the Mayer-Vietoris sequence associated to $\Tor^U_\bullet(\argu,\Q)$ specializes to
\begin{equation}
\label{eq:mt5}
\xymatrix{
\coprod_{r\in\caR} H_1({}^rC,\Q)\ar[r]^-{\alpha}&
 \coprod_{r\in\caR} H_1({}^rG_1,\Q)\oplus H_1(U_2,\Q)\ar[r]
&H_1(U,\Q)\ar[r]&0}.
\end{equation}
This yields
\begin{equation}
\label{eq:mt6}
\rk_\Q(U)=p\cdot \rk_\Q(G_1)+\rk(U_2)-\delta,
\end{equation}
where $\delta=\dim(\image(\alpha))$. We distinguish two cases.

\noindent
{\bf (1)} $C=1$. So $\delta=0$. Then, by  \eqref{eq:mt5}, 
\begin{equation}
\label{eq:mt7}
\rk_{\Q} (U)=p\cdot \rk_{\Q}(G_1)+\rk_{\Q}(U_2).
\end{equation}
As $\rk_{\Q}(U_2)\geq \rk_{\Q}(G_2)$ with equality in case
that $G_2$ is abelian, one concludes that
$\rk_{\Q}(U)= p\cdot\rk_{\Q}(G_1)+\rk_{\Q}(U_2)
>\rk_{\Q}(G_1)+\rk_{\Q}(G_2)\geq \rk_{\Q}(G)$.

\noindent
{\bf (2)} $C\not=1$. Then, by \eqref{eq:mt6}, 
\begin{gather}
\rk_{\Q}(U)\geq p\cdot (\rk_{\Q}(G_1)-1)+\rk_{\Q}(U_2)
\geq 2\cdot (\rk_{\Q}(G_1)-1)+\rk_{\Q}(U_2).\label{eq:mt8}\\
\intertext{As $d(G_1)=1$ implies that $G=\Z\star_{\Z} G_2\simeq G_2$,
which was excluded by (I), we may assume that $d(G_1)\geq 2$.
Hence}
\rk_{\Q}(U) \geq\rk_{\Q}(G_1)+\rk_{\Q}(U_2).\label{eq:mt9} 
\end{gather}
If $G_2$ is non-abelian, then, by induction,  
$\rk_{\Q}(U_2)\geq 1+\rk_{\Q}(G_2)$. Hence 
$\rk_{\Q}(U)\geq1+\rk_{\Q}(G_1)+\rk_{\Q}(G_2)> \rk_{\Q}(G)$.
If $G_2$ is abelian, then $G=G_1\star_\Z (\Z\times B)$ for some free abelian group $B$ of  rank  $d(G_2)-1$. In particular,  $\rk_{\Q}(G)=\rk_{\Q}(G_1)+d(B)$.
As $\rk_{\Q}(U_2)=d(B)+1$, \eqref{eq:mt9} yields the claim also in this case.

\noindent
{\bf Case (c):}
By hypothesis, $\tLambda$ is a graph with two vertices $v$ and $w$ and
$p$ geometric edges. We may assume that $G_1=\stab_G(v)$, $G_2=\stab_G(w)$
and put $U_1=U\cap G_1$, $U_2=U\cap G_2$. By the same argument used in
Case (b), one obtains an exact sequence
\begin{equation}
\label{eq:casc1}
\xymatrix{
\coprod_{r\in\caR} H_1({}^rC,\Q)\ar[r]^-\beta&
H_1(U_1,\Q)\oplus H_1(U_2,\Q)\ar[r]& H_1(U,\Q)\ar[d]\\
 &0&\ar[l]\Q^{p-1}\\
}
\end{equation}
where $\caR\subset G$ is a set of representatives of $G/U$. Again we distinguish two cases.
 
\noindent
{\bf (1)} $C=1$. Then $\beta$ is the $0$-map, and as $\rk_{\Q}(U_i)\geq \rk_{\Q}(G_i)$ for $i\in\{1,2\}$, one has
\begin{equation}
\label{eq:casc2}
\rk_{\Q}(U)=\rk_{\Q}(U_1)+\rk_{\Q}(U_2)+(p-1)
\geq \rk_{\Q}(G_1)+\rk_{\Q}(G_2)+1>\rk_{\Q}(G).
\end{equation}
{\bf (2)} $C\neq 1$.
Then, by \eqref{eq:casc1},
\begin{equation}
\label{eq:casc3}
\rk_{\Q}(U)\geq \rk_{\Q}(U\cap G_1)+\rk_{\Q}(U\cap G_2)-1.
\end{equation} 
If $G_1$ and  $G_2$ are non-abelian, then,
by  induction, $\rk_{\Q}(U\cap G_1)\geq 1+\rk_{\Q}(G_1)$ and  
$\rk_{\Q}(U\cap G_2)>\rk_{\Q}(G_2)$.
Hence
\begin{equation}
\label{eq:casc4}
\rk_{\Q}(U)>\rk_{\Q}(G_1)+1+rk_{\Q}(G_2)-1=\rk_{\Q}(G_1)+\rk_{\Q}(G_2)\geq 
\rk_{\Q}(G).
\end{equation}
In case that one of the groups $G_1$, $G_2$ is abelian,
not both of them can be abelian. Otherwise, the Euler characteristic
$\chi(G)=\chi(G_1)+\chi(G_2)-\chi(C)$ must equal $0$,
and $G$ must be  abelian, which was excluded by hypothesis
(cf. Prop.~\ref{l4}).
So without loss of generality we may assume that
$G_1$ is a non-abelian, and that $G_2$ is abelian.
Then $G_2\simeq \Z\times B$, where $B$ is a free abelian group of rank  
$\rk_{\Q}(G_2)-1$, and $G\simeq G_1\star_{\Z}(\Z\times B)$.
Hence  $\rk_{\Q}(G)=\rk_{\Q}(G_1)+\rk_{\Q}(G_2)-1$. 
Furthermore, $\rk_{\Q}(U\cap G_2)=\rk_{\Q}(G_2)$, and,
by induction, $\rk_{\Q}(U\cap G_1)\geq 1+\rk_{\Q}(G_1)$.
Thus, by \eqref{eq:casc3},
\begin{equation}
\rk_{\Q}(U)
\geq \rk_{\Q}(G_1)+1+\rk_{\Q}(G_2)-1=\rk_{\Q}(G_1)+\rk_{\Q}(G_2)> \rk_{\Q}(G).
\end{equation}
{\bf Case II:} Let  $G=\HNN_\phi(G_1,C,t)=
\langle\, G_1,t\mid\,t\,c\,t^{-1}=\phi(c)\,\rangle$ be an HNN-extension
with   $C=\langle c \rangle$. 
By \eqref{eq:baslem2}, one has
\begin{equation}
\label{eq:hnn1}
\rk_{\Q}(G_1)\leq \rk_{\Q}(G)\leq \rk_{\Q}(G_1)+1.
\end{equation} 
If $C=1$, then $G=G_1\star  <t>$ is isomorphic to a free product.
Hence the claim follows already from Case I. So we may assume that $C\not=1$.
Note that $G_1$ must be non-abelian.
Otherwise, one has $\chi(G)=\chi(G_1)-\chi(C)=0$, 
and $G$ must be abelian (cf. Prop.~\ref{l4}), a contradiction. 

As in Case I, the group $G$ has a natural vertex transitive action on a tree $T$
with vertex stabilizer isomorphic to $G_1$, and edge stabilizer isomorphic to $C$.
In particular,
\begin{equation}
\label{eq:hnn2}
|G:UG_1|\in\{1,p\},\qquad\text{and}\qquad |G:UC|\in\{1,p\}.
\end{equation}
Hence one may distinguish the following two cases:
\begin{itemize}
\item[(II.1)] $G=UC$;
\item[(II.2)] $|G:UC|=p$.
\end{itemize}
\noindent
{\bf Case II.1:} By hypothesis, $G=UG_1$. In particular, $U$ is acting vertex transitively on $T$, and has two orbits on the set of edges, i.e., $U\dbs T$ is a loop with one vertex.
Thus $U\simeq \HNN_\phi(U\cap G_1, C^p, t)=
\langle\,U\cap G_1,t\mid t\,(c^p)\,t^{-1}=\phi(c)^{p}\,\rangle$, and as in \eqref{eq:hnn1}, one concludes that
\begin{equation}
\label{eq:hnn3}
\rk_{\Q}(U\cap G_1)\leq \rk_{\Q}(U)\leq \rk_{\Q}(U\cap G_1)+1.
\end{equation}
Since $G_1$ is non-abelian and $|G_1:U\cap G_1|=p$, induction implies that  
$\rk_{\Q}(G_1\cap U)\geq \rk_{\Q}(G_1)+1$. Hence  
\begin{equation}
\label{eq:hnn4}
\rk_{\Q}(U)\geq \rk_{\Q}(G_1\cap U)\geq \rk_{\Q}(G_1)+1\geq \rk_{\Q}(G).
\end{equation}  
Suppose that $\rk_{\Q}(U)=\rk_{\Q}(G)$. Then 
$\rk_{\Q}(U)=\rk_{\Q}(G_1\cap U)=\rk_{\Q}(G_1)+1=\rk_{\Q}(G)$. 
In particular, by Lemma \ref{l2}(b), $\rho({U})=0$ and $\rho(G)=1$, i.e.,
$\alpha$ is the $0$-map, and $\alpha_1$ is injective.
However, as 
the map $\tr_1$
in the diagram
\begin{equation}
\label{eq:hnn5}
\xymatrix{
\ldots\ar[r]&H_1(C,\Q)\ar[r]^\alpha\ar[d]_{\tr_1}& H_1(G_1,Q)\ar[d]_{\tr_2}\ar[r]&
H_1(G,\Q)\ar[d]^{\tr_3}\ar[r]&\ldots\\
\ldots\ar[r]&H_1(C^p,\Q)\ar[r]^{\alpha_1}& H_1(U\cap G_1,Q)\ar[r]&H_1(U,\Q)\ar[r]&
\ldots}
\end{equation}
- which is given by the transfer -
is an isomorphism, this yields a contradiction. Thus $\rk_{\Q}(U)<\rk_{\Q}(G)$.

\noindent
{\bf Case II.2:} In this case $U$ has $2p$ orbits on $E(T)$, and again one may distinguish two cases:
\begin{itemize}
\item[{\bf (a)}] $U$ acts vertex transitively on $T$;
\item[{\bf (b)}] $U$ has $p$ orbits on $V(T)$.
\end{itemize}

\noindent
{\bf Case (a):}
In this case $U\dbs T$ is a bouquet of $p$ loops, 
$|G_1\colon U\cap G_1|=p$ and $C\subseteq U$.
The Mayer-Vietoris sequence
for $H_\bullet(\argu,\Q)$ yields a commutative and exact diagram
\begin{equation}
\label{eq:hnn6}
\xymatrix{
C\otimes_\Z\Q\ar[r]^{\alpha}\ar[d]_{\tr_1}&G_1^{\ab}\otimes_\Z\Q\ar[r]\ar[d]_{\tr_2}&
G^{\ab}\otimes_\Z\Q\ar[r]\ar[d]^{\tr_3}&\Q\ar[r]\ar[d]&0\\
\coprod_{i=1}^p {}^{g_i}C\otimes_\Z\Q\ar[r]^{\alpha_1}&(U\cap G_1)^{\ab}\otimes_\Z\Q\ar[r]&
U^{\ab}\otimes_\Z\Q\ar[r]&\Q^p\ar[r]&0,
}
\end{equation}
where $\{\,g_1,\ldots,g_p\}\subseteq G_1$ is a set of representatives for $G/U$,
and $\tr_{2/3}$ is induced by the transfer, and for $c\in C$, $q\in \Q$, one has
$\tr_1(c\otimes q)= \sum_{i=1}^p {}^{g_i} c\otimes q$. Moreover, 
\begin{equation}
\label{eq:hnn7}
\alpha_1((^{g_i}c)\otimes q)=\big(g_i\,c\,\phi(c)^{-1}g_i^{-1}(U\cap G_1)^\prime\big)\otimes q, \qquad\text{for $c\in C$.}
\end{equation} 
Thus, one has
\begin{equation}
\label{eq:hnn8}
\rk_{\Q}(U)+\dim_{\Q}(\image(\alpha_1))=\rk_{\Q}(U\cap G_1)+p,
\end{equation}
and induction implies that  $\rk_{\Q}(U\cap G_1)\geq \rk_{\Q}(G_1)+1$. 
Hence 
\begin{equation}
\label{eq:hnn9}
\rk_{\Q}(U)\geq \rk_{\Q}(U\cap G_1)\geq \rk_{\Q}(G_1)+1\geq \rk_{\Q}(G).
\end{equation}
Suppose that $\rk_{\Q}(U)=\rk_{\Q}(G)$.
Then equality holds throughout \eqref{eq:hnn9}. 
In particular, \eqref{eq:hnn8} implies that
$\dim_{\Q}(\image(\alpha_1))=p $, i.e., $\alpha_1$ is injective.
Hence, as $\tr_1$ is injective, $\alpha_1\circ\tr_1$ is injective.
From Lemma \ref{l2}(b) one concludes that $\rho(G)=1$ and $\alpha=0$, 
a contraction. Thus $\rk_{\Q}(U)>\rk_{\Q}(G)$ must hold.
 
\noindent
{\bf Case (b):}    
In this case one has $G_1\subseteq U$ and $C\subseteq U$.
As $G/U$ is acting vertex transitively on 
$\Lambda=U\dbs T$, $\Lambda$ must be $k$-regular.
Hence $|E(\Lambda)|=k\cdot |V(\Lambda)|$, forcing $k=2$.
So $\Lambda$ is a $2$-regular connected graph, and thus
a circuit with $p$ vertices.

Let $\{\,g_1,\ldots,g_p\}\subseteq G$ be a set of representatives for $G/U$.  
Then the Mayer-Vietoris sequence
for $H_\bullet(\argu,\Q)$ yields a commutative and exact diagram
\begin{equation}
\label{eq:hnn10}
\xymatrix{
C\otimes_\Z\Q\ar[r]^{\alpha}\ar[d]_{\tr_1}&G_1^{\ab}\otimes_\Z\Q\ar[r]\ar[d]_{\tr_2}&
G^{\ab}\otimes_\Z\Q\ar[r]\ar[d]^{\tr_3}&\Q\ar[r]\ar[d]&0\\
\coprod_{i=1}^p {}^{g_i}C\otimes_\Z\Q\ar[r]^{\alpha_1}&
\coprod_{i=1}^p{}^{g_i}G_1^{\ab}\otimes_\Z\Q\ar[r]&
U^{\ab}\otimes_\Z\Q\ar[r]&\Q\ar[r]&0,
}
\end{equation}
where $\tr_{1/2}$ are the diagonal maps and
$\tr_3$ is the transfer.
Hence
\begin{equation}
\label{eq:hnn11} 
\rk_{\Q}(U)=1+p\cdot (\rk_{\Q}(G_1)-1)+\dim_{\Q}(\kernel({\alpha_1})).
\end{equation}
Therefore, as $(p-1)\cdot(\rk_{\Q}(G_1)-1)\geq 1$,
\begin{equation}
\label{eq:hnn12}
\rk_{\Q}(U)\geq  \rk_{\Q}(G_1)+1\geq \rk_{\Q}(G).
\end{equation}  
Suppose that $\rk_{\Q}(U)=\rk_{\Q}(G)$. Then equality has to hold throughout
\eqref{eq:hnn12}, and 
$\dim_{\Q}(\kernel({\alpha_1}))=0$, $p=2$, $\rk_{\Q}(G_1)=2$.
In particular, $\alpha_1$ is injective, and by Lemma~\ref{l2}(b),
$\rho(G)=1$, i.e., $\alpha=0$. Hence, as $\tr_1$ is injective,
one obtains a contradiction as in Case (a) showing that $\rk_{\Q}(U)>\rk_{\Q}(G)$.
\end{proof}

Theorem~\ref{thm:A} has the following consequence.

\begin{cor}
\label{cor1}
Let $G$ be a non-abelian limit group such that $G^{\ab}$ is torsion free and that $d(G)=d(G^{\ab})$. Then, if $U$ is a normal subgroup of $G$ of prime index $p$,  one has that $d(U)>d(G)$.
\end{cor}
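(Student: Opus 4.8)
The plan is to derive Corollary~\ref{cor1} directly from Theorem~\ref{thm:A} by chasing the chain of inequalities $\rk_{\Q}(H)\leq d(H^{\ab})\leq d(H)$ recorded in the introduction, applied to both $G$ and $U$. The starting observation is that since $G^{\ab}$ is torsion free, $d(G^{\ab})=\rk_{\Z}(G^{\ab})=\rk_{\Q}(G)$, so the hypothesis $d(G)=d(G^{\ab})$ says precisely that $d(G)=\rk_{\Q}(G)$. On the other side, $U$ is again a non-abelian limit group: a normal subgroup of prime index in a non-abelian limit group cannot be abelian, since an abelian limit group is free abelian, hence $\Z^r$, and $\Z^r$ has no non-abelian finite extension that is torsion free of the required kind — more simply, if $U$ were abelian then $G$ would be virtually abelian, forcing $\chi(G)=0$ and $G$ abelian by Proposition~\ref{l4}, a contradiction. (One must also dispose of the case $U=1$, which would make $G$ finite, hence trivial since limit groups are torsion free, again contradicting non-abelian.) So Theorem~\ref{thm:A} applies and gives $\rk_{\Q}(U)\geq \rk_{\Q}(G)+1$.

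Now assemble: $d(U)\geq d(U^{\ab})\geq \rk_{\Q}(U)\geq \rk_{\Q}(G)+1 = d(G)+1 > d(G)$, using in the last equality the reduction from the first paragraph. This is the whole argument; the only points needing a sentence of justification are (i) why $U$ is non-abelian, handled via the Euler-characteristic obstruction of Proposition~\ref{l4} together with torsion-freeness of limit groups, and (ii) the elementary inequalities $\rk_{\Q}(H)\leq d(H^{\ab})\leq d(H)$, which hold for any finitely generated group and are already stated in the introduction right after \eqref{eq:dGab}.

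I expect the main (very minor) obstacle to be bookkeeping about degenerate cases for $U$: one should be slightly careful that "prime index $p$" forces $U$ of finite index but still infinite (non-trivial), and that one genuinely needs $U$ non-abelian to invoke Theorem~\ref{thm:A}, rather than just $U\neq 1$. Both are immediate from the fact that limit groups are torsion free together with Proposition~\ref{l4}: a finite-index abelian (or trivial) subgroup makes $G$ virtually $\Z^r$ (or finite), forcing $\chi(G)=0$ and hence $G$ abelian. No surface-group or graph-of-groups machinery is needed here — Theorem~\ref{thm:A} does all the work, and Corollary~\ref{cor1} is a two-line consequence once the non-abelianity of $U$ is noted.
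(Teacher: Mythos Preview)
Your argument is correct and follows the same one-line route as the paper: $d(U)\geq \rk_{\Q}(U)>\rk_{\Q}(G)=d(G^{\ab})=d(G)$, using Theorem~\ref{thm:A} for the strict inequality and the torsion-freeness hypothesis for $\rk_{\Q}(G)=d(G^{\ab})$. The only superfluous part is your detour establishing that $U$ is non-abelian: Theorem~\ref{thm:A} requires $G$ to be non-abelian, not $U$, so it applies immediately from the hypotheses of the corollary without any further verification.
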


\begin{proof}
By Theorem~\ref{thm:A}, one has 
$d(U)\geq \rk_{\Q}(U)>\rk_{\Q}(G)=d(G^{\ab})=d(G)$. 
\end{proof} 

Obviously, if $G$ is a free group, an abelian free group
or an iterated extension of centralizers group of a free group, then 
$\rk_{\Q}(G)=d(G)$. In particular, in connection with Theorem~\ref{t3}
one concludes the following.

\begin{cor}
\label{cor2}
Let $G$ be a non-abelian limit group satisfying $d(G)\leq 3$,
and let $U$ be a normal subgroup of $G$ of prime index $p$.
Then $d(U)\geq \rk_{\Q}(U)>\rk_{\Q}(G)=d(G)$.
\end{cor}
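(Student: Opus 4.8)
\emph{Proof proposal.} The plan is to reduce the statement to three facts already available: the universal estimate $\rk_{\Q}(H)\leq d(H^{\ab})\leq d(H)$ recorded in \S\ref{s:intro}, the strict inequality supplied by Theorem~\ref{thm:A}, and the classification Theorem~\ref{t3}. First, applying the universal estimate to $U$ gives $d(U)\geq\rk_{\Q}(U)$. Secondly, since $G$ is a non-abelian limit group and $U\trianglelefteq G$ has prime index $p$, Theorem~\ref{thm:A} yields $\rk_{\Q}(U)>\rk_{\Q}(G)$. Hence the whole corollary comes down to verifying the single equality $\rk_{\Q}(G)=d(G)$ under the hypothesis $d(G)\leq 3$.

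For this one invokes Theorem~\ref{t3}: as $G$ is non-abelian with $d(G)\leq 3$, either $G$ is a free group of rank $2$ or $3$, or $G$ is a free abelian group of rank $2$ or $3$, or $G$ is an extension of centralizers of a free group of rank $2$, i.e.\ $G=\langle\,x_1,x_2,x_3\mid x_3^{-1}vx_3=v\,\rangle$ for some nontrivial $v\in F(\{x_1,x_2\})$, as in \eqref{eq:free3}. In the free and free abelian cases $\rk_{\Q}(G)=d(G)$ is immediate, since $H_1(F_r,\Q)\cong\Q^r$ and $H_1(\Z^r,\Q)\cong\Q^r$. In the remaining case one writes $G\simeq F(\{x_1,x_2\})\star_{\langle v\rangle}(\langle v\rangle\times\Z)$; since $\langle v\rangle$ is a direct factor of the right-hand vertex group, the map $\alpha$ in \eqref{eq:baslem4} is injective, so $\rho(G)=0$ and Lemma~\ref{l2}(a) gives $\rk_{\Q}(G)=\rk_{\Q}(F(\{x_1,x_2\}))+\rk_{\Q}(\langle v\rangle\times\Z)-1=2+2-1=3=d(G)$. (Equivalently, abelianizing the presentation kills the single relator $x_3^{-1}vx_3v^{-1}$, a commutator, so $G^{\ab}\cong\Z^3$.) Thus $\rk_{\Q}(G)=d(G)$ in all cases.

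Combining the three facts gives $d(U)\geq\rk_{\Q}(U)>\rk_{\Q}(G)=d(G)$, which is the asserted chain of (in)equalities. There is no real obstacle in this argument: the substantive work is entirely carried by Theorem~\ref{thm:A} and the structural classification Theorem~\ref{t3}, and the only point requiring a line of verification is the elementary observation—already noted in the paragraph preceding the statement—that $\rk_{\Q}$ and $d$ agree for free groups, free abelian groups, and iterated extensions of centralizers of free groups, which covers exactly the limit groups with $d(G)\leq 3$.
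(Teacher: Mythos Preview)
Your proposal is correct and follows essentially the same route as the paper: the paper simply remarks (in the paragraph preceding the corollary) that $\rk_{\Q}(G)=d(G)$ for free groups, free abelian groups, and iterated extensions of centralizers of a free group, and then invokes Theorem~\ref{t3} together with Theorem~\ref{thm:A}. You spell out the extension-of-centralizers case via Lemma~\ref{l2}(a) (or equivalently the commutator observation), which the paper leaves as ``obvious''; a minor cosmetic point is that the free abelian cases you list cannot occur since $G$ is assumed non-abelian, but this is harmless.
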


\section{Limit Groups  with the \IFP-property}
\label{homfin}
For a limit group $G$ it is not necessarily true that $d(G) = d (G^{\ab})$.
The following lemma shows that there exists a class of limit groups 
containing groups $G$ satisfying $d(G) \neq d (G^{\ab})$ 
(cf. Remark~\ref{rem:notab})
for which Question* has an affirmative answer.

\begin{Lemma}
\label{r2}
Let $G=G_1\star_C G_2$ be a non-abelian limit group, 
where $G_1$ and $G_2$ are free groups of finite rank $r(G_1)$ and $r(G_2)$, respectively,
and let $C=\langle c \rangle$ be an infinite cyclic group or trivial. 
Then, if $U$ is a normal subgroup of prime index  $p$ in $G$, one has 
$\rk_{\Q}(U)> d(G)$. In particular, $d(U)>d(G)$.
\end{Lemma}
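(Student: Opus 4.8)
The plan is to estimate $\rk_{\Q}(U)$ from below by a deficiency count when $G_1$ and $G_2$ are ``large'', and to reduce the remaining low-rank configurations to Theorem~\ref{t3} and Theorem~\ref{thm:A}. Put $r_1=r(G_1)$, $r_2=r(G_2)$. If $C=1$ then $G\cong F_{r_1+r_2}$; if $C=\langle c\rangle\neq 1$, choose free bases of $G_1$ and $G_2$ and write $c=c_1=c_2$ with $c_i\in G_i$, obtaining a one-relator presentation $G=\langle\, x_1,\dots,x_{r_1},y_1,\dots,y_{r_2}\mid c_1=c_2\,\rangle$ with $r_1+r_2$ generators. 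In all cases $d(G)\le r_1+r_2$.

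First I would dispose of the case $r_1+r_2\ge 4$. Applying Reidemeister--Schreier rewriting to the presentation above (equivalently, passing to the $p$-fold cover of the presentation $2$-complex of $G$ and collapsing a spanning tree of its $1$-skeleton) yields a presentation of the index-$p$ subgroup $U$ with $p(r_1+r_2-1)+1$ generators and at most $p$ relators. Since the rational rank of a finitely presented group is at least the difference between the numbers of generators and relators in any of its presentations (indeed $H^{\ab}\otimes_{\Z}\Q$ is a quotient of a free $\Q$-vector space on the generators by the span of the images of the relators), this gives $\rk_{\Q}(U)\ge\bigl(p(r_1+r_2-1)+1\bigr)-p=p(r_1+r_2-2)+1\ge 2(r_1+r_2)-3> r_1+r_2\ge d(G)$, the strict inequality using $r_1+r_2\ge 4$ and $p\ge 2$. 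Hence $\rk_{\Q}(U)>d(G)$, and $d(U)\ge\rk_{\Q}(U)>d(G)$.

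There remains the range $r_1+r_2\le 3$, where $d(G)\le r_1+r_2\le 3$; as $G$ is non-abelian this forces $d(G)\in\{2,3\}$, so Theorem~\ref{t3} applies. If $G$ is a free group, it has rank $d(G)\ge 2$ and the Nielsen--Schreier formula yields $\rk_{\Q}(U)=d(U)=p(d(G)-1)+1>d(G)$. If $G$ is free abelian this contradicts the non-abelianity of $G$. The only other possibility, by Theorem~\ref{t3}(c), is that $d(G)=3$ and $G=\langle\, x_1,x_2,x_3\mid x_3^{-1}vx_3=v\,\rangle$ with $1\neq v\in F(\{x_1,x_2\})$ is an extension of centralizers of a free group of rank $2$; then the defining relator is a commutator, hence trivial in $H_1$, so $G^{\ab}\cong\Z^3$ and $\rk_{\Q}(G)=3=d(G)$, whereupon Theorem~\ref{thm:A} gives $\rk_{\Q}(U)>\rk_{\Q}(G)=d(G)$. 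In every case $\rk_{\Q}(U)>d(G)$, and since $d(U)\ge\rk_{\Q}(U)$ the final assertion follows.

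I expect the delicate point to be precisely the low-rank range $r_1+r_2\le 3$: there the crude deficiency bound no longer beats $d(G)$, and one must appeal to Theorem~\ref{t3} to conclude that $G$ is either free --- where the statement is immediate --- or an extension of centralizers of a free group of rank $2$, the remaining subcase, which is again harmless because there $\rk_{\Q}(G)=d(G)$ and Theorem~\ref{thm:A} already suffices. This route also has the advantage of avoiding any direct analysis (via Bass--Serre theory) of which amalgams $F_{r_1}\star_C F_{r_2}$ with some $r_i=1$ are actually non-free non-abelian limit groups.
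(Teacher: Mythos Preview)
Your argument is correct, and it takes a genuinely different route from the paper's. The paper proves Lemma~\ref{r2} by running the Bass--Serre case analysis of Theorem~\ref{thm:A} again (Cases~I.1, I.2(b), I.2(c)), this time applying the Nielsen--Schreier formula directly to the free vertex groups $G_i$ to sharpen the Mayer--Vietoris estimates \eqref{eq:mt3}, \eqref{eq:mt8}, \eqref{eq:casc3} from bounds in $\rk_{\Q}(G_i)$ to bounds in $d(G_i)$; the low-rank configurations are disposed of via Corollary~\ref{cor2}. You instead exploit that $G$ is a one-relator group on $r_1+r_2$ generators and use a single Reidemeister--Schreier deficiency count to get $\rk_{\Q}(U)\ge p(r_1+r_2-2)+1$, which already beats $d(G)\le r_1+r_2$ once $r_1+r_2\ge 4$; your treatment of $r_1+r_2\le 3$ via Theorem~\ref{t3} and Theorem~\ref{thm:A} is essentially an inline proof of Corollary~\ref{cor2}, so the two arguments agree there. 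Your approach is shorter and more elementary---it never needs to decompose $U$ as a graph of groups---and it transfers verbatim to Lemma~\ref{r1}, since $\HNN_\phi(F_r,C,t)$ is likewise a one-relator group on $r+1$ generators. The paper's approach, by contrast, yields more structural information about $U$ and keeps the proof parallel to that of Theorem~\ref{thm:A}.
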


\begin{proof}
If $C=1$, $G$ is a finitely generated free group, and there is nothing to prove.
So from now on we may assume that $C\neq 1$.
Then, as $G$ is non-abelian, either $G_1$ or $G_2$ must be
non-abelian. Otherwise, $\chi(G)=\chi(G_1)+\chi(G_2)-\chi(C)=0$, and by Lemma~\ref{l4}, $G$ must be abelian.
Let $T$ be the tree on which $G$ acts naturally.
Then, as in the proof of Theorem~\ref{thm:A}, Case I,
one may distinguish three cases:
\begin{itemize}
\item[(1)] $G=UC$, i.e., $U$ has one orbits on $E_1$ and $E_2$;
\item[(2)] $|G:UC|=p$, i.e., $U$ has $p$ orbits on $E_1$ and $E_2$;
\begin{itemize}
\item[(b)] $|G:UG_1|=p$ and $G=UG_2$;
\item[(c)] $G=UG_1=UG_2$.
\end{itemize}
\end{itemize}

\noindent
{\bf Case 1:}
By hypothesis, $G=UG_1$ and $G=UG_2$ and therefore,
\begin{equation}
\label{eq:lA1}
U\simeq (U\cap G_1)\star_{U\cap C} (U\cap G_2).
\end{equation}
In analogy to Case~I.1 of the proof of Theorem~\ref{thm:A},
we may choose $G_1$ to be non-abelian.
If $G_2$ is abelian, then $G_2$ must be cyclic, i.e., $d(G_2)=1$. If $d(G_1)=2$, then
$d(G)\leq 3$, and the claim follows from Corollary~\ref{cor2}.
Thus, we may assume that $d(G_1)\geq 3$.
Again, by the Nielsen-Schreier theorem (and as $d(G_1)\geq 3$), 
one has that $d(U\cap G_1)> 1+d(G_1)$.    
As $G_2$ is abelian, $d(U\cap G_2)=d(G_2)$.
Therefore, by Lemma \ref{l2}(a), one has
\begin{equation}
\label{eq:lA2}
\rk_{\Q}(U)\geq d(U\cap G_1)+ d(U\cap G_2)-1> d(G_1)+d(G_2)\geq d(G).
\end{equation}
Thus, we may assume that $G_2$ is non-abelian.
From the Nielsen-Schreier theorem one concludes that $d(U\cap G_1)\geq 1+d(G_1)$ and $d(U\cap G_2)\geq 1+d(G_2)$.  Therefore, by Lemma \ref{l2}(a),
\begin{equation}
\label{eq:lA3}
\rk_{\Q}(U)\geq d(U\cap G_1)+ d(U\cap G_2)-1\geq d(G_1)+d(G_2)+1 > d(G).
\end{equation}
\noindent
{\bf Case 2(b):}
By \eqref{eq:mt8}, one has that
\begin{equation}
\label{eq:lA4}
\rk_{\Q}(U)\geq p\cdot (d(G_1)-1)+d(G_2\cap U).
\end{equation}
If $G_1$ is  abelian, then $d(G_1)=1$.  In this case $G_2$ is  non-abelian.
If $d(G_2)=2$,  then $d(G)\leq 3$ and the claim follows from Corollary~\ref{cor2}.
So we may assume that $d(G_2)\geq 3$. In particular, 
the Nielsen-Schreier theorem and \eqref{eq:lA4} imply that
$\rk_{\Q}(U)\geq d(G_2\cap U)>1+d(G_2)\geq d(G)$.

So from now on we may assume that $G_1$ is non-abelian, i.e., $d(G_1)\geq 2$. 
If $G_2$ is abelian, then $d(U\cap  G_2)=d(G_2)=1$. So if $d(G_1)=2$, then 
the claim follows by Corollary~\ref{cor2}. 
Thus we may assume that $d(G_1)\geq 3$. 
Then, by \eqref{eq:lA4} and the Nielsen-Schreier theorem, one concludes that
\begin{equation}
\label{eq:lA5} 
\rk_{\Q}(U)\geq p\cdot (d(G_1)-1)+1\geq d(G_1)+2\cdot (p-1)>d(G_1)+1\geq d(G).
\end{equation}
So we may assume that $G_2$ is non-abelian.
In this case the Nielsen-Schreier theorem implies that 
$d(U\cap  G_2)\geq 1+d(G_2)$. Then, by \eqref{eq:lA4}, one has
\begin{equation}
\label{eq:lA6}
\begin{aligned}
\rk_{\Q}(U)&\geq
p\cdot d(G_1)-p+d(G_2)+1\geq
d(G_1)+d(G_2)+(p-1)\\
&\geq d(G_1)+d(G_2)+1>d(G).
\end{aligned}
\end{equation} 
\noindent
{\bf Case 2(c):}
By \eqref{eq:casc3}, one has
\begin{equation}
\label{eq:lA7}
\rk_{\Q}(U)\geq d(U\cap G_1)+d(U\cap G_2)-1.
\end{equation}
Then, the proof of Case 1 can be transferred verbatim in order to show that
the claim holds if one of the groups $G_i$, $i\in\{1,2\}$ is abelian.
So we may assume that $G_1$ and $G_2$ are non-abelian.
Then the Nielsen-Schreier theorem and the same argument which was used 
in order to prove \eqref{eq:lA3} implies that $\rk_{\Q}(U)> d(G)$.
\end{proof}

For HNN-extensions one has the following.

\begin{Lemma}
\label{r1}
Let $G=\HNN_\phi(G_1,C,t)=\langle\, G_1,t\mid t\,c\,t^{-1}=\phi(c)\,\rangle$ be a non-abelian
limit group, where $G_1$ is a free group of finite rank $r$ and $C$ is an infinite cyclic group or trivial.
Let $U$ be a normal subgroup of $G$ of prime index $p$.
Then  $\rk_{\Q}(U)>d(G)$, and, in particular,  $d(U)>d(G)$.
\end{Lemma}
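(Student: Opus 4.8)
The plan is to imitate the structure of the proof of Lemma~\ref{r2}, but now for the HNN-case, following the case analysis of Case~II in the proof of Theorem~\ref{thm:A}. First I would dispose of the degenerate cases: if $C=1$, then $G=G_1\star\langle t\rangle$ is a finitely generated free group and the claim is immediate from the Nielsen--Schreier theorem; so from now on $C\neq 1$, and since $G$ is non-abelian while $\chi(G)=\chi(G_1)-\chi(C)=0$ would force $G$ abelian by Proposition~\ref{l4}, the group $G_1$ is a non-abelian free group, hence $d(G_1)=r\geq 2$. I would also record from \eqref{eq:baslem2} that $\rk_{\Q}(G_1)\leq\rk_{\Q}(G)\leq\rk_{\Q}(G_1)+1$, and that here $\rk_{\Q}(G_1)=d(G_1)=r$; combined with $d(G)\leq d(G_1)+1=r+1$ this gives the bookkeeping target $\rk_{\Q}(U)>r+1$ suffices (with attention to the case $d(G)=r$).

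Next I would split along the two subcases of Case~II exactly as in Theorem~\ref{thm:A}: either $G=UC$ (Case~II.1), or $|G:UC|=p$ which further breaks into $U$ acting vertex-transitively on $T$ (Case~II.2(a), so $U\dbs T$ is a bouquet of $p$ loops) or $U$ having $p$ orbits on $V(T)$ (Case~II.2(b), so $U\dbs T$ is a circuit of length $p$). In Case~II.1, $U\simeq\HNN_\phi(U\cap G_1,C^p,t)$ with $U\cap G_1$ of index $p$ in the non-abelian free group $G_1$, so Nielsen--Schreier gives $d(U\cap G_1)=p(r-1)+1$; then \eqref{eq:hnn3} yields $\rk_{\Q}(U)\geq \rk_{\Q}(U\cap G_1)=p(r-1)+1$, and since $p\geq 2$ and $r\geq 2$ this is $\geq 2(r-1)+1=2r-1>r+1$ once $r\geq 3$, while the low case $r=2$ (so $d(G)\leq 3$) is covered by Corollary~\ref{cor2}. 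In Case~II.2(a), \eqref{eq:hnn8} gives $\rk_{\Q}(U)=\rk_{\Q}(U\cap G_1)+p-\dim_{\Q}(\image\alpha_1)\geq \rk_{\Q}(U\cap G_1)=p(r-1)+1$ (using $\dim\image\alpha_1\leq p$), the same estimate as before. In Case~II.2(b), $G_1\subseteq U$ and \eqref{eq:hnn11} gives $\rk_{\Q}(U)=1+p(r-1)+\dim_{\Q}(\kernel\alpha_1)\geq 1+p(r-1)$, again the same bound. So in all three subcases the single inequality $1+p(r-1)>r+1\geq d(G)$, valid whenever $r\geq 3$ (since then $p(r-1)\geq 2(r-1)=2r-2>r$), finishes the argument, and $r=2$ is handled by Corollary~\ref{cor2}.

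The one point needing a little care — and the place I expect the only real friction — is the boundary case $r=2$ together with the fact that we only need $\rk_{\Q}(U)>d(G)$, not $\rk_{\Q}(U)>\rk_{\Q}(G)$; here $d(G)$ could be as large as $3$, and $1+p(r-1)=1+p$ is $\geq 3$ only when $p\geq 2$, which always holds, but the strict inequality $1+p>3$ fails for $p=2$. This is precisely why one cannot avoid invoking Theorem~\ref{t3}/Corollary~\ref{cor2}: when $d(G_1)=2$ the group $G$ is a one-relator limit group with $d(G)\leq 3$, and Corollary~\ref{cor2} directly gives $d(U)\geq\rk_{\Q}(U)>\rk_{\Q}(G)=d(G)$. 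Assembling these pieces, the final conclusion $d(U)\geq\rk_{\Q}(U)>d(G)$ follows in every case.
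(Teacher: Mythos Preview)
Your proposal is correct and follows essentially the same approach as the paper's proof: the same reduction to $C\neq 1$ and $r\geq 2$ via the Euler-characteristic argument, the same three-case split along Case~II of Theorem~\ref{thm:A}, the same appeal to Corollary~\ref{cor2} for the boundary case $r=2$, and the same Nielsen--Schreier estimate $d(U\cap G_1)=p(r-1)+1$ feeding into the inequalities \eqref{eq:hnn3}, \eqref{eq:hnn8}/\eqref{eq:hnn9}, and \eqref{eq:hnn11}. The only cosmetic difference is that you consolidate the three subcases into the single bound $\rk_{\Q}(U)\geq 1+p(r-1)$ before comparing with $d(G)\leq r+1$, whereas the paper treats each subcase separately with the equivalent inequality phrased as $d(U\cap G_1)>r+1$ (or $(p-1)(r-1)>1$); the content is identical.
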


\begin{proof}
If $C$ is trivial, then $G=G_1\star <t>$ is a free group of rank $r+1$,
and there is nothing to prove. Moreover, if $r=1$, then $\chi(G)=0$, and $G$ must be abelian
(cf. Prop.~\ref{l4}),
which was excluded by hypothesis. Hence $r\geq 2$.
Let $T$ be the tree on which $G$ acts naturally.
Then one may distinguish three cases:
\begin{itemize}
\item[(1)] $G=UC$;
\item[(2)] $|G:UC|=p$;
\begin{itemize}
\item[(a)] $U$ acts vertex transitiely on $T$;
\item[(b)] $U$ has $p$ orbits on $V(T)$.
\end{itemize}
\end{itemize}
 
\noindent
{\bf Case 1:}
In this case one has $U\simeq \HNN_{\phi}(U\cap G_1,C^p,t)$ (cf. Pf. of Theorem~\ref{thm:A}, Case II.1).
If $r=2$, then $d(G)\leq 3$, and the claim follows from Corollary~\ref{cor2}.
Hence we may assume that $r\geq 3$.
In this case the Nielsen-Schreier theorem implies that that $d(G_1\cap U)> r+1$, and by
\eqref{eq:hnn3}, one concludes that
\begin{equation}
\label{eq:HNN1}
\rk_{\Q}(U)\geq d(G_1\cap U)> r+1\geq d(G).
\end{equation}
{\bf Case 2(a):}
As in Case 1, we may assume that $r\geq 3$.
Then, as $|G_1:G_1\cap U|=p$ and $G_1$ is  a non-abelian free group, the Nielsen-Schreier theorem implies that $d(U\cap G_1)> r+1.$ 
Hence, by \eqref{eq:hnn4}, 
\begin{equation}
\label{eq:HNN2}
\rk_{\Q}(U)\geq d(U\cap G_1)> r+1\geq d(G).
\end{equation}
{\bf Case 2(b)}
As in Case 1, we may assume that $r\geq 3$.
By \eqref{eq:hnn11} and the fact that
$(p-1)\cdot (d(G_1)-1)>1$, one concludes that
\begin{equation}
\label{eq:HNN3}
\rk_{\Q}(U)\geq 1+p\cdot (d(G_1)-1)>d(G_1)+1\geq d(G)
\end{equation}
completing the proof of the lemma.
\end{proof}

As a consequence one concludes the following.

\begin{thm}
\label{thm:1rel}
Let $G$ be a limit group with the \textbf{IF}-property, 
and let $U$ be a normal subgroup of $G$ of prime index prime $p$.
Then $d(U)> d(G)$. 
\end{thm}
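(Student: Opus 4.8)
The plan is to invoke the structure theorem for limit groups with the \IFP-property (Theorem~\ref{T1}), which says that such a $G$ is either a free group, a cyclically pinched one-relator group, or a conjugacy pinched one-relator group. In the first case there is nothing to prove, since $\rk_{\Q}(U)=d(U)>d(G)=\rk_{\Q}(G)$ by the Nielsen--Schreier computation \eqref{eq:mt1}. In the second case $G=G_1\star_C G_2$ with $G_1,G_2$ finitely generated free groups and $C$ infinite cyclic (a cyclically pinched one-relator group is precisely an amalgam of two free groups over a cyclic subgroup generated by a word in each factor); here the conclusion $d(U)>d(G)$ is exactly Lemma~\ref{r2}. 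In the third case $G=\HNN_\phi(G_1,C,t)$ with $G_1$ free of finite rank and $C$ infinite cyclic, and the conclusion is exactly Lemma~\ref{r1}.

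So the proof is essentially a three-line case split: First I would recall Theorem~\ref{T1} to reduce to the three types. Then I would dispatch the free case directly. Then I would observe that, up to the harmless degeneracies already handled in the two lemmas ($C$ trivial, or a factor being infinite cyclic), a cyclically pinched one-relator group falls under the hypotheses of Lemma~\ref{r2} and a conjugacy pinched one-relator group under those of Lemma~\ref{r1}, and in both cases those lemmas give $\rk_{\Q}(U)>d(G)$, hence a fortiori $d(U)\geq\rk_{\Q}(U)>d(G)$.

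The only point requiring a little care is matching the terminology: a \emph{cyclically pinched} one-relator group $\langle x_1,\dots,x_k,y_1,\dots,y_l\mid u=v\rangle$, with $u$ a word in the $x_i$ and $v$ a word in the $y_j$, is the amalgam $F(x_1,\dots,x_k)\star_{\langle u\rangle=\langle v\rangle}F(y_1,\dots,y_l)$, and a \emph{conjugacy pinched} one-relator group $\langle x_1,\dots,x_k,t\mid t\,u\,t^{-1}=v\rangle$ with $u,v$ words in the $x_i$ is the HNN-extension $\HNN_\phi(F(x_1,\dots,x_k),\langle u\rangle,t)$; one should note that in the degenerate situation where $u$ (or $v$) is trivial the relevant group is free, already covered, and that the non-abelianity hypothesis on $G$ (needed to apply the lemmas) is automatic here unless $G$ is itself free abelian of rank at most $2$, in which case $G$ has no proper subgroup of index $p$ with smaller rank in the wrong direction and in fact the statement is vacuous or follows from Corollary~\ref{cor2}. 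Thus I expect the main (and only) obstacle to be purely bookkeeping: confirming that the two lemmas' hypotheses literally cover the two one-relator families coming out of Theorem~\ref{T1}, with the trivial/cyclic boundary cases peeled off by hand.
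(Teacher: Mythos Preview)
Your plan matches the paper's proof exactly: split via Theorem~\ref{T1} into the free, cyclically pinched, and conjugacy pinched cases, and dispatch the latter two by Lemmas~\ref{r2} and~\ref{r1} respectively. One caveat on your final paragraph: the parenthetical treatment of the abelian case does not work---$\Z^2$ is a limit group with the \IFP-property (its infinite-index subgroups are trivial or cyclic) for which $d(U)=d(G)=2$ for every index-$p$ subgroup $U$, so the conclusion genuinely fails there and is neither vacuous nor a consequence of Corollary~\ref{cor2}; the paper, like the rest of the article (cf.\ Question*, Theorem~A, Corollary~B), tacitly takes $G$ non-abelian and does not attempt the side argument you sketch.
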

 
\begin{proof}
By Theorem~\ref{T1},
$G$ is either a free group, or a cyclically pinched one relator group,
or a conjugacy pinched one relator group.
In the first case there is nothing to prove.
In the second case $G\simeq G_1\star_C G_2$ with $G_1$ and $G_2$ free groups,
and $C$ being an infinite cyclic group being generated by a cyclically reduced word $w\in G_1$
and $C$ is a maximal cyclic subgroup in either $G_1$ or $G_2$.
Hence in this case, Lemma~\ref{r2} yields the claim.
If $G$ is a conjugacy  pinched one-relator group, then
$G\simeq \HNN_\phi(G_1,C,t)$ with $G_1$ a free group,
$C$ and infinite cyclic subgroup being generated by a cyclically reduced word $w\in G_1$,
and either $C$ or $\phi(C)$ is a maximal cyclic subgroup of $G_1$.
Then, by Lemma \ref{r1}, one has that $d(U)>d(G)$ completing the proof.
\end{proof}

\begin{rem}
\label{rem:notab}
It should be mentioned that there exist limit groups $G$
satisfying $d(G^{\ab})\lneq d(G)$.
Let $G_1=G_2=F_2$ be the free group of rank two, and let 
$C_1=C_2=\langle w \rangle$, where $w=a^2ba^{-1}b^{-1}$.
Then $w$ is cyclically reduce, and $C_i$ is a maximal cyclic subgroup of $G_i$ for $i=1,2$,
but $C_i$ is not a free factor. 
The corresponding double $G=G_1\star_{C_1=C_2}G_2$ is a non-abelian
limit group and has abelianization $G^{ab}$ isomorph to $\Z^3$, i.e., $d(G^{ab})=3$. 
By construction, there exists a surjective homomorphism
$\beta\colon G\to (F_2/\langle w^{F_2}\rangle)\star (F_2/\langle w^{F_2}\rangle)$,
and by hypothesis and Grushko's theorem,
$d(F_2/\langle w^{F_2}\rangle\star F_2/\langle w^{F_2}\rangle)=4$. Hence $d(G)=4$.
Note that by Lemma~\ref{r2}, $\rk_{\Q}(U)>4$ for any normal subgroup $U$ of $G$ of prime index $p$.
\end{rem}

\begin{rem}
\label{rem:tflim}
The group $G=\langle a,b\rangle_{a^2b^2=c^2d^2}\langle c,d\rangle$
is the fundamental group of a closed non-orientable surface, and it is known that it is a limit group
(cf. \cite[\S3.1]{Guirardel}).
However, $G^{\ab}\simeq \Z^3\times C_2$, where $C_2=\Z/2Z$.
\end{rem}

\end{document}